\numberwithin{equation}{section}
\newtheorem{theorem}[equation]{Theorem}
\newtheorem{lemma}[equation]{Lemma}
\newtheorem{proposition}[equation]{Proposition}
\newtheorem{corollary}[equation]{Corollary}
\newtheorem{conjecture}[equation]{Conjecture}
\theoremstyle{definition}}
\theoremstyle{definition}}
\theoremstyle{definition}\newtheorem{remark}[equation]{Remark}}
\theoremstyle{definition}\newtheorem{defn}[equation]{Definition}}
\def\N{{\mathbb N}}
\def\Z{{\mathbb Z}}
\def\K{{\mathbb K}}
\def\d{\partial}
\def\epsilon{\varepsilon}
\def\phi{\varphi}
\def\leq{\leqslant}
\def\geq{\geqslant}
\def\dim{{\rm dim}\,}
\def\deg{\hbox{\tt \rm deg}\,}
\def\N{{\mathbb N}}
\def\Z{{\mathbb Z}}
\def\phi{\varphi}
\def\dim{\text{\rm dim}\,}
\def\epsilon{\varepsilon}
\def\phi{\varphi}
\newcommand{\U}{{U}}
\newcommand{\mb}{\mathbb}
\newcommand{\CC}{\mb K} 
\newcommand{\ZZ}{\mb Z}
\newcommand{\NN}{\mb N}
\newcommand{\KK}{\K}
\newcommand{\mc}{\mathcal}
\newcommand{\mf}{\mathfrak}
\newcommand{\DMO}{\DeclareMathOperator}
\DMO{\Ann}{Ann}
\newcommand{\V}{V\!ir}
\DMO{\GK}{GKdim}
\DMO{\gr}{gr}
\newcommand{\beq}{\begin{equation}}
\newcommand{\eeq}{\end{equation}}
\newcommand{\blank}{\mbox{$\underline{\makebox[10pt]{}}$}}
\newcommand{\del}{\partial}
\title{
Enveloping  algebras with just infinite Gelfand-Kirillov dimension}
\author{Natalia K. Iyudu, Susan J. Sierra}
\date{\today}
\address{School of Mathematics, The University of Edinburgh, Edinburgh EH9 3FD, United Kingdom}
\email{n.iyudu@ed.ac.uk}
\email{s.sierra@ed.ac.uk}
\keywords{Witt algebra, positive Witt algebra, Virasoro algebra,  Gelfand-Kirillov dimension}
\subjclass[2010]{Primary:  16S30, 17B68, 16P90; Secondary 17B65}
\begin{document}
\begin{abstract}
Let $\mf g$ be the Witt algebra or the positive Witt algebra.
It is well known that the enveloping algebra $U(\mf g )$ has intermediate growth and thus infinite Gelfand-Kirillov (GK-) dimension.
We prove that the GK-dimension  of  $U(\mf g)$ is {\em just infinite} in the sense that
any proper quotient of $U(\mf g)$ has polynomial growth.
 This proves a conjecture of Petukhov and the second named author for the positive Witt algebra.
 We also establish the corresponding results for quotients of the symmetric algebra $S(\mf g)$ by proper Poisson ideals.

In fact, we prove more generally that any central quotient of the universal enveloping algebra of the Virasoro algebra has just infinite GK-dimension.
We give several applications.
In particular, we easily compute the  annihilators of Verma modules over the Virasoro algebra.
\end{abstract}

\maketitle


\section{Introduction}

Let ${\mathbb K}$ be a  field of characteristic zero,
and let $W$ be the {\em Witt algebra}, which has
${\mathbb K}$-basis
$$
\{e_n:n\in{\mathbb Z}\},
$$
with the Lie bracket
$$
[e_i,e_j]=(j-i)e_{i+j}.
$$
We let $W_+$, the {\em positive Witt algebra}, be the Lie subalgebra of $W$ spanned by $\{e_n: n \geq 1\}$.

The Witt algebra is a central quotient of the {\em Virasoro algebra}, $\V$, which has $\mb K$-basis
\[ \{e_n : n \in \ZZ\}\cup \{c\},\]
and Lie bracket
\[ [ e_i, e_j ] = (j-i)e_{i+j} + \frac{i^3-i}{12} \delta_{i+j,0} c, \quad \mbox{$c$ central}.\]
The Virasoro algebra and its representations are ubiquitous in conformal field theory.

These algebras were  testing examples for the  fundamental and important question of whether there is an infinite-dimensional Lie algebra with a (left and right) noetherian enveloping algebra.
This question has been asked by many people, including Ralph Amayo and Ian Stewart \cite[Question 27, p. 396]{AS}, Ken Brown \cite[Question B]{Brown}, Jacques Dixmier, and Victor Latyshev, and  conjecturally  has a negative answer; see \cite[Conjecture~0.1]{SW1}.
Recently it was shown by the second named author and Chelsea Walton \cite{SW1} that  the conjecture holds for the Lie algebras above:  that is, $U(W_+)$, $U(W)$, and $U(\V)$ are not left or right Noetherian.
The question is still unsolved in full generality.

However, the two-sided ideal structure of these enveloping algebras is extremely sparse, and it seems possible that they satisfy the ascending chain condition for two-sided ideals, a property sometimes known as being {\em weakly Noetherian}.
Further, two-sided ideals of $W$ and $W_+$ are known to be large, and Petukhov and the second named author have conjectured:
\begin{conjecture}\label{conj1}
{\rm (\cite[Conjecture~1.2]{PS})} The universal enveloping algebra $U(W_+)$ has just infinite GK-dimension; that is, if $I$ is a non-zero ideal of $U(W_+)$,
then $U(W_+)/I$ has polynomial growth.
\end{conjecture}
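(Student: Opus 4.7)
The plan is to reduce the conjecture to a Poisson statement about the symmetric algebra $S(W_+)$ and then exploit the canonical weight grading on that algebra. First, I would equip $U(W_+)$ with its PBW filtration, so that $\gr U(W_+) \cong S(W_+)$; any two-sided ideal $I$ of $U(W_+)$ then produces a nonzero Poisson ideal $\gr I \subset S(W_+)$, with $\gr (U(W_+)/I) \cong S(W_+)/\gr I$. Since GK-dimension is preserved on passing to the associated graded of such a filtration, it suffices to prove the Poisson analogue: every nonzero Poisson ideal $J \subset S(W_+)$ yields a quotient of polynomial growth.

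Next, I would introduce the weight grading $\deg e_n := n$ on $S(W_+)$. The Poisson bracket $\{e_i, e_j\} = (j-i) e_{i+j}$ is weight-homogeneous, so a further associated-graded argument (taking top-weight components under the weight filtration) reduces us to the case where $J$ is weight-graded. Fix a nonzero weight-homogeneous element $f \in J$. The strategic aim is a structural lemma: there exist constants $d$ and $M_0$, depending only on $f$, such that for every $N \geq M_0$, $J$ contains an element whose leading term in a suitable PBW order is $e_N^d$. To produce such elements I would iterate the Poisson derivations $\{e_{N_1}, \{e_{N_2}, \cdots \{e_{N_k}, f\}\cdots\}\}$, using Leibniz and $\{e_N, e_i\} = (i-N) e_{i+N}$. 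Once this lemma is in place, $S(W_+)/J$ is generated as a module over $\mb K[e_1,\ldots,e_{M_0 - 1}]$ by PBW monomials with bounded individual degree in each $e_N$ for $N \geq M_0$, and polynomial growth follows.

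The principal obstacle is the combinatorial control of iterated Leibniz-expanded Poisson brackets. A single bracket $\{e_N, e_{n_1}^{a_1} \cdots e_{n_k}^{a_k}\}$ expands as a weighted sum whose coefficients are polynomial in $N$ and the multiplicities $a_j$, so isolating a predictable leading term through repeated applications requires ruling out systematic cancellations. The natural tactic is to fix a term order on PBW monomials and proceed by induction on the support size of $f$, choosing $N$ generically or invoking a Vandermonde-type argument on the polynomial dependence in $N$ to separate contributions of different weight. The precise number of iterations needed, and hence the eventual bound on the polynomial growth of $S(W_+)/J$, will be controlled by the total degree and weight of the initial $f$. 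With this combinatorial lemma established, the passage back from $S(W_+)/\gr I$ to $U(W_+)/I$ is essentially formal.
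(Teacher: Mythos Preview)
Your reductions via the PBW filtration and then to a weight-homogeneous generator $f$ are fine and match the paper's setup. The gap is in the structural lemma you aim for and in the growth estimate you draw from it.

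Suppose you succeed in producing, for each $N\geq M_0$, an element of $J$ with leading term $e_N^d$. Reducing modulo these relations only bounds the \emph{multiplicity} of each individual big letter by $d-1$; it does not bound the \emph{total number} of big letters appearing. In particular, squarefree monomials $e_{N_1}e_{N_2}\cdots e_{N_r}$ with $M_0\leq N_1<N_2<\cdots<N_r$ survive untouched, and the count of such monomials of weight $\leq N$ is essentially the number of partitions of integers $\leq N$ into distinct parts $\geq M_0$. That grows like $e^{c\sqrt{N}}$, not polynomially. So the conclusion ``generated over $\mathbb K[e_1,\dots,e_{M_0-1}]$ by monomials with bounded individual exponents'' does not yield polynomial growth.

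What is actually needed, and what the paper proves, is the stronger statement: there exist $k$ and $n$ such that for \emph{every} length-$k$ monomial $m=x_{j_1}\cdots x_{j_k}$ with all $j_\ell\geq n$, the ideal contains an element with leading term $m$. This bounds the total number of big letters in a normal-form monomial by $k-1$, and that does give polynomial growth. The paper achieves this not by a genericity or Vandermonde argument but by an explicit, deterministic choice of derivations: if $\overline g=x_{i_1}\cdots x_{i_k}$ (indices increasing) is the leading monomial of $g$ for a carefully chosen order and $n\geq 2i_k+1$, one applies $d_{j_1-i_k}d_{j_2-i_{k-1}}\cdots d_{j_k-i_1}$ to $g$. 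The coefficient of $m$ in the result is a product of factors $j_\ell - i_{k-\ell+1} - p_{\sigma(k-\ell+1)}$, each strictly positive by the choice of $n$, so no cancellation can occur. Your proposal anticipates the difficulty of controlling cancellations in iterated Leibniz expansions, but the resolution is this explicit positivity trick together with aiming at \emph{all} big monomials of length $k$, not just pure powers.
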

\noindent (In this paper, we use {\em polynomial growth} as a synonym for finite GK-dimension.)

This conjecture was proved in \cite{PS} in the particular case that the ideal $I$ is generated by quadratic expressions in the $e_i$.
In this paper we establish the conjecture in full, and generalise our  arguments to prove that $\U(W)$ and indeed any central factor of $\U(\V)$ has just infinite GK-dimension in the sense above.
Our main result is:
\begin{theorem}\label{ithm:U}
{\rm (See Theorems~\ref{theoremW+}, \ref{theoremW} and  \ref{theoremWkappa}.) }
The algebras $U(W_+)$, $U(W)$, and $U(\V)/(c-\kappa)$, for any $\kappa \in \K$, have just infinite GK-dimension.
In particular, Conjecture~\ref{conj1} holds.
\end{theorem}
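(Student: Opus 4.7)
The plan is to reduce all three assertions of Theorem \ref{ithm:U} to Poisson-algebraic statements in the symmetric algebras $S(W_+)$ and $S(W)$, and then to attack those statements by iterated Poisson brackets. First I would equip each enveloping algebra with its standard PBW filtration, giving $\gr U(W_\pm) \cong S(W_\pm)$ as Poisson algebras. For the Virasoro family, the element $c-\kappa$ has filtration degree one with leading symbol $c$, so
\[
\gr\bigl(U(\V)/(c-\kappa)\bigr) \;\cong\; S(\V)/(c) \;\cong\; S(W)
\]
as Poisson algebras, for every $\kappa \in \K$. Consequently the associated graded of any non-zero two-sided ideal in the respective algebra is a non-zero Poisson ideal of $S(W_\pm)$ or $S(W)$, and because the PBW filtration has finite-dimensional pieces the GK-dimension is preserved on passing to the associated graded. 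It therefore suffices to prove the following: every proper Poisson quotient of $S(W_+)$ and of $S(W)$ has polynomial growth.

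The heart of the argument is then Poisson-combinatorial. With bracket $\{e_i,e_j\}=(j-i)e_{i+j}$ on $S(W)$, I would start with a non-zero element $f$ in a Poisson ideal $J$. Since $\{e_0,-\}$ acts on the monomial $e_{n_1}\cdots e_{n_r}$ by the scalar $n_1+\cdots+n_r$, the ideal $J$ decomposes into weight spaces and we may assume $f$ is isobaric of some weight $w$. Applying $\{e_k,-\}$ shifts every index appearing in $f$ upward by $k$, with polynomially computable Leibniz coefficients. By forming suitable linear combinations of iterated brackets $\{e_{k_1},\{e_{k_2},\cdots\{e_{k_r},f\}\cdots\}\}$, I would produce a family of translates of $f$ lying in $J$ whose supports march off to $+\infty$; eliminating common leading monomials between two such translates should yield, for all sufficiently large $n$, a polynomial relation expressing $e_n$ modulo $J$ in terms of finitely many $e_m$ with $m<n$, which bounds $\GK(S(W)/J)$ by an explicit finite constant. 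The same shifting argument, restricted to positive $k$, should handle $S(W_+)$ (noting that the naive extension of a Poisson ideal of $S(W_+)$ to $S(W)$ need not be Poisson, so the positive case really does have to be run inside $S(W_+)$).

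From the two Poisson statements the three cases of Theorem \ref{ithm:U} follow uniformly via the PBW reduction: the $S(W_+)$ statement gives Conjecture \ref{conj1}, the $S(W)$ statement gives the $U(W)$ case and, through the identification above, the $U(\V)/(c-\kappa)$ case for every $\kappa \in \K$. The main obstacle is the combinatorial bookkeeping inside the Poisson core: one must verify that the Leibniz coefficients produced by iterated brackets do not conspire to cancel, so that the sliding family of translates of $f$ is genuinely rich enough to enforce the desired polynomial dependence, and one must quantify this dependence sharply enough to read off an explicit bound on $\GK(S(W)/J)$. A geometric route through the coadjoint orbits of $W$ on $W^*$ might give a cleaner formulation, but the direct bracket calculation looks tractable thanks to the rescaling-covariant form of the Witt relation.
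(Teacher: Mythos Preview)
Your reduction via the PBW (length) filtration does not go through as stated: the filtration pieces $F_n U(W)$ are \emph{infinite}-dimensional (there are infinitely many $e_i$ already in length $1$), so the claim that ``the PBW filtration has finite-dimensional pieces'' is false, and the standard transfer $\GK(A/I)=\GK(\gr A/\gr I)$ is not available. For $U(W_+)$ this can be repaired, either through the finite $\mathbb N$-grading $\deg e_i=i$ or via the Poisson GK-machinery of \cite{PS}. But for $U(W)$ and $U(\V)/(c-\kappa)$ there is \emph{no} finite grading by degree, and this is exactly the obstacle the paper confronts in Section~\ref{fw}: one must build, on each individual quotient, a bespoke finite filtration $\delta_C(e_{i_1}\cdots e_{i_k})=|i_1|+\cdots+|i_k|+C$ and then verify (Proposition~\ref{prop:delta}) that the reduction procedure respects it. Passing to $S(W)$ does not sidestep this; the same difficulty reappears there, and your sketch does not address it.

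The Poisson core of your sketch also overshoots. First, $\{e_k,-\}$ shifts \emph{one} index by $k$ via Leibniz, not all of them. More importantly, iterated brackets $\{e_{k_1},\{\cdots\{e_{k_r},f\}\cdots\}\}$ of a length-$k$ element $f$ produce elements of length at most $k$, and subtracting two such to cancel a leading monomial again gives something of length at most $k$. You will therefore never obtain a relation of the shape ``$e_n=$ polynomial in $e_m$ with $m<n$'' unless $J$ already contains a length-one element, which is far too strong a hypothesis. What the bracket procedure actually delivers (Lemma~\ref{lemmaP1} and Lemma~\ref{lem:newreduction}) is that every length-$k$ monomial on sufficiently big letters can be rewritten, modulo $J$, as a combination of monomials that are either strictly shorter or contain at least one small letter. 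The resulting normal words $m=aub$ (with $u$ on a fixed finite alphabet and $|a|,|b|<k$) must then be counted against the filtration $\delta_C$; that counting, not a recurrence among the generators, is what yields the polynomial bound.
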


We note that these  algebras are all finitely generated  and as  many authors have noticed \cite{Smith, Ufn, KKM}
have  intermediate  growth (and thus infinite GK-dimension).
The  natural set of normal words is monomials of the form $e_{i_1},\dots,e_{i_k}, \quad i_1\leq i_2 \dots \leq i_k$.
Here the number of monomials with $i_1 + i_2 + \dots + i_k = n$ is in fact  the number of partitions of $n$, which was shown by Hardy to be bounded by  $e^{c \sqrt n}$, an estimate later dramatically improved by Ramanujan.

We also consider the induced Poisson structures on the  symmetric algebras $S(W_+)$, $S(W)$, and $S(\V)$, and prove:
\begin{theorem}\label{ithm:S}
{\rm (See Theorems~\ref{theoremP} and \ref{theoremPkappa}.) }
Let $I$ be a proper Poisson ideal of $S(W_+)$.
Then $S(W)/I$ has polynomial growth.
Similar statements hold for $S(W)$ and for $S(\V)/(c-\kappa)$, for any $\kappa \in \K$.
\end{theorem}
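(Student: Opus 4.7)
The plan is to parallel the enveloping algebra arguments of Theorems~\ref{theoremW+}, \ref{theoremW}, and \ref{theoremWkappa} in the Poisson setting, replacing commutators in $U(\mf g)$ with Poisson brackets in $S(\mf g)$. The two structures are linked by the fact that $S(\mf g)=\gr U(\mf g)$ with respect to the PBW filtration, and the Poisson bracket is the symbol of the commutator; in particular on generators one has $\{e_i,e_j\}=(j-i)e_{i+j}$ (with the usual central correction in the Virasoro case), formally identical to the Lie bracket. A key structural bonus is that $\{e_i,-\}$ acts as a graded derivation on $S(\mf g)$ preserving the symmetric degree, so each iterated Poisson bracket produces an element of exactly the same symmetric degree as the one we started with, which is cleaner than the filtered situation in $U(\mf g)$.

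The core strategy is then as follows. Given a nonzero Poisson ideal $I\subseteq S(\mf g)$ and a nonzero $f\in I$, I would generate further elements of $I$ by iterated Poisson brackets of the form $\{e_{n_1},\{e_{n_2},\ldots\{e_{n_k},f\}\ldots\}\}$ with well-chosen indices $n_j$, and track their leading terms in the ordered-monomial basis $e_{i_1}\cdots e_{i_m}$, $i_1\leq\cdots\leq i_m$. Following the blueprint of the associative proofs, this should produce in $I$ elements whose leading monomials generate, modulo $I$, a quotient of $S(\mf g)$ whose Hilbert series grows polynomially, yielding the desired finite GK-dimension of $S(\mf g)/I$. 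As in the enveloping algebra case, I expect Theorem~\ref{theoremPkappa} on $S(\V)/(c-\kappa)$ to be the master result: the statement for $S(W)$ then drops out by taking $\kappa=0$ and noting $S(W)=S(\V)/(c)$ as Poisson algebras, while the statement for $S(W_+)$ must be obtained by a parallel self-contained argument restricted to positive indices, since $S(W_+)$ is a Poisson \emph{subalgebra} rather than a Poisson quotient of $S(W)$.

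The main obstacle will be making the combinatorial heart of the enveloping-algebra proofs go through with only top-degree Poisson information. In $U(\mf g)$ the commutator $[e_n,-]$ can drop filtered degree and thereby provide useful sub-leading terms, whereas in $S(\mf g)$ the Poisson bracket is strictly degree-preserving on the polynomial degree and gives no such slack; any step that exploited cancellation in lower filtered terms in $U(\mf g)$ must be replaced by a purely graded computation in $S(\mf g)$. Showing that the iterated Poisson brackets do not collapse at top order, and that the rational coefficients produced by the structure constants $(j-i)$ remain nonzero in characteristic zero, is where the real work of the proof will be concentrated; the restriction to $W_+$, where only positive indices are available to bracket with, is the most delicate instance since the set of admissible $n_j$ is strictly smaller.
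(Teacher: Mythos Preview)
Your strategy---generate elements of $I$ by iterated Poisson brackets $\{x_{a_1},\{x_{a_2},\ldots,\{x_{a_k},g\}\ldots\}\}$, track leading monomials, and count the surviving normal words---is exactly the paper's method, and your identification of the nonvanishing of the top coefficient as the crux is correct. But you have the logical architecture and relative difficulty inverted. In the paper, $S(W_+)$ is not the most delicate case deduced last by paralleling $U(W_+)$; it is the \emph{foundational} case, proved first and directly (Lemma~\ref{lemmaP1} and Theorem~\ref{theoremP}), and $U(W_+)$ is then handled afterwards by passing to the associated graded and invoking the symmetric-algebra computation. Your worry that the absence of sub-leading ``slack'' in the Poisson setting creates an obstacle is precisely backwards: because $d_a=\{-,x_a\}$ is a graded derivation preserving symmetric degree, the computation in $S(W_+)$ is cleaner, and the key nonvanishing (that the leading coefficient is a product of strictly positive integers once one takes $n\geq 2i_k+1$) is an entirely top-order calculation that uses no lower terms at all.

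For $S(\V)/(c-\kappa)$ the paper does proceed roughly as you anticipate, but via a lift rather than a pure Poisson argument: one writes $g=\gr(g')$ for $g'\in U(\V)/(c-\kappa)$, performs the reduction with commutators $\partial_a$ in the enveloping algebra using the already-established machinery of Section~\ref{fw}, and then applies $\gr$, noting that only the length-$k$ terms survive (so cases I and I$'$ disappear and only cases II, III remain). Your proposed direct Poisson argument would also work and is arguably more natural; the paper's route simply recycles the enveloping-algebra bookkeeping, particularly the filtration $\delta_C$ needed because the degree grading on $S(W)$ is not finite.
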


The Lie algebra $\V$ has a triangular decomposition, and so the classical notion of a {\em Verma module} makes sense.  
As an application of Theorem~\ref{ithm:U}, we 
easily compute annihilators of Verma modules for $\V$:  
\begin{theorem}\label{ithm:Verma}
{\rm (see Corollary~\ref{cor:Verma})}
Let $M$ be a Verma module over $\V$ with central charge $\kappa$.
Then the annihilator of $M$ in $\U(\V)$ is the ideal $(c-\kappa)$.
As a result, for any $\kappa \in \mathbb K$ the algebra $\U(\V)/(c-\kappa)$ is primitive.
\end{theorem}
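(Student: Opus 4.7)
The inclusion $(c-\kappa) \subseteq \Ann_{\U(\V)}(M)$ is immediate, since by construction of the Verma module $M = M(h,\kappa)$ the central element $c$ acts as the scalar $\kappa$.  For the reverse inclusion I would argue by contradiction: if $\Ann(M) \supsetneq (c-\kappa)$, then $\Ann(M)/(c-\kappa)$ is a non-zero ideal of $\U(\V)/(c-\kappa)$, so by Theorem~\ref{theoremWkappa} the quotient $A := \U(\V)/\Ann(M)$ has finite GK-dimension.

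To contradict this, I would exhibit super-polynomial growth directly inside $A$.  By definition $A$ acts faithfully on $M$, so any collection of elements of $A$ whose images in $\operatorname{End}(M)$ are linearly independent remains linearly independent in $A$.  By the PBW theorem, the vectors $e_{-i_1}\cdots e_{-i_r}v_h$ with $i_1 \geq \cdots \geq i_r \geq 1$ form part of a basis of $M$, whence the corresponding monomials $e_{-i_1}\cdots e_{-i_r}$ are linearly independent in $A$.  Set $V := \operatorname{span}(1, e_{-1}, e_{-2}) \subseteq A$.  The identity $[e_{-1}, e_{-k}] = (1-k)e_{-k-1}$ gives, by induction on $k$, that $e_{-k} \in V^k$ for every $k \geq 1$, and consequently $e_{-i_1}\cdots e_{-i_r} \in V^{i_1+\cdots+i_r}$.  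Thus
\[ \dim V^n \;\geq\; \sum_{m=0}^n p(m), \]
which by Hardy's asymptotic for the partition function grows super-polynomially in $n$ --- contradicting $\GK(A) < \infty$.  Hence $\Ann(M) = (c-\kappa)$.

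For the primitivity assertion it suffices to produce a faithful simple $\U(\V)/(c-\kappa)$-module.  By the Kac determinant formula, the Verma module $M(h,\kappa)$ is simple for all but countably many $h \in \K$; choosing such an $h$ in the infinite field $\K$, the first part of the theorem exhibits $M(h,\kappa)$ as the required faithful simple module.

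\textbf{Main obstacle.}  The heart of the argument is the super-polynomial lower bound on $\dim V^n$ in the hypothetical finite-GK quotient $A$; this rests on the observation that each $e_{-k}$ lies in the $k$-th power of a fixed three-dimensional subspace together with Hardy's asymptotic for $p(m)$.  The primitivity statement is essentially formal once the faithful action on $M$ is in hand, modulo the standard existence of a simple Verma module.
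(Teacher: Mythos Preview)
Your argument is correct and follows essentially the same route as the paper: both invoke Theorem~\ref{theoremWkappa} to force finite GK-dimension on the hypothetical larger quotient and derive a contradiction from the partition-function growth of PBW monomials in the $e_{-i}$, and both appeal to the standard simplicity of generic Verma modules for primitivity. The only cosmetic difference is that the paper locates the super-polynomial growth in the module $M$ and transfers it to the algebra via \cite[Proposition~5.1(d)]{KL}, whereas you locate it directly in $A$ using the faithful action --- a slightly more self-contained variant of the same idea.
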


Theorem~\ref{ithm:Verma} is an unpublished result of Wallach \cite{Wallach}.

We also obtain:
\begin{proposition}\label{iprop:ACC}
 {\rm (See Proposition~\ref{prop:ACC}.)}
The algebras  $\U(W)$, $\U(W_+)$, and $\U(\V)$ all satisfy the ascending chain condition for  completely prime ideals.
\end{proposition}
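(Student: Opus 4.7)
The strategy is to convert any ascending chain of completely prime ideals into a strictly decreasing sequence of Gelfand-Kirillov dimensions via Theorem~\ref{ithm:U}, and then conclude by well-ordering. Let $A$ be one of the three algebras and $P_1\subseteq P_2\subseteq\cdots$ an ascending chain of completely prime ideals in $A$; after discarding an initial segment we may assume $P_1\ne 0$. For $A=\U(W_+)$ or $A=\U(W)$, Theorem~\ref{ithm:U} directly yields that each $A/P_i$ has finite GK-dimension. For $A=\U(\V)$ one first needs the reduction that every nonzero completely prime ideal contains some $c-\kappa$: two distinct values $\kappa,\kappa'\in\K$ with $c-\kappa,c-\kappa'\in P_i$ would force $1\in P_i$, so all $P_i$ share a single $\kappa$ and the chain lies inside $\U(\V)/(c-\kappa)$, where Theorem~\ref{ithm:U} again applies. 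The expected argument for this reduction is that if $P_1\cap\K[c]=0$, then $\K[\bar c]$ would embed into the centre of the domain $A/P_1$; localising $A$ at the central Ore set $\K[c]\setminus\{0\}$ and specialising $c$ to a scalar in a suitable field extension produces a quotient that contradicts the finite GK-dimension guaranteed by Theorem~\ref{ithm:U}, forcing $P_1\cap\K[c]$ to be a nontrivial prime of $\K[c]$.

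Having reduced to the just-infinite setup, write $R_i:=A/P_i$; each is a finitely generated domain of finite GK-dimension, and $R_{i+1}$ is a domain quotient of $R_i$ by the nonzero two-sided ideal $P_{i+1}/P_i$. We then invoke the standard dimension-drop principle: if $R$ is a finitely generated domain of finite GK-dimension and $J\subseteq R$ is a nonzero two-sided ideal for which $R/J$ is also a domain, then $\GK(R/J)<\GK(R)$. In the Noetherian setting this is due to Krause--Lenagan; in our context a regular element exists (finitely generated domains of finite GK-dimension over a field are Ore, by Jategaonkar), and one then argues that the drop must be strict. Iterating produces a strictly decreasing sequence of non-negative integer GK-dimensions, which must terminate, contradicting the assumption that the original chain was infinite and strictly increasing.

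The main technical hurdle is the strict dimension-drop in the noncommutative setting, since a regular element of a noncommutative domain need not be \emph{normal}, which is the hypothesis under which the Krause--Lenagan estimate is cleanest. A natural workaround is to exploit the PBW filtration and pass to the associated graded, where Theorem~\ref{ithm:S} governs the commutative Poisson quotients of $S(\V)/(c-\kappa)$ and the classical Krull dimension argument gives the strict drop for free---at the cost of checking that the graded chain remains strict (i.e.\ that $\gr P_i$ does not stabilise prematurely), which is the usual delicate point when comparing an ideal to its symbol. A secondary technical point is the Virasoro reduction above, which similarly rests on the existence of a central character for every nonzero completely prime ideal of $\U(\V)$, a fact that dovetails with Theorem~\ref{ithm:Verma}.
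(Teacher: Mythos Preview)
Your overall shape for $U(W_+)$ and $U(W)$ is right and matches the paper: pass to the first nonzero prime, use Theorem~\ref{ithm:U} to land in finite GK-dimension, and then observe that each strict inclusion of completely prime ideals drops GK-dimension by at least one. However, your concern about the dimension drop is misplaced. The inequality $\GK(R/I)\le\GK(R)-1$ for an affine algebra $R$ and an ideal $I$ containing a (two-sided) regular element is exactly \cite[Proposition~3.15]{KL}, and requires neither noetherianity nor that the regular element be normal. Since each $R_i=A/P_i$ is an affine domain, any nonzero element of $P_{i+1}/P_i$ is regular in $R_i$ and the drop is automatic. There is no need to pass to the associated graded, invoke Theorem~\ref{ithm:S}, or worry about symbol ideals.

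The genuine gap is your treatment of $U(\V)$. You assert that every nonzero completely prime ideal $P$ contains some $c-\kappa$, but your sketch does not produce a contradiction: after localising at $\K[c]\setminus\{0\}$ you obtain that $(U(\V)\otimes_{\K[c]}\K(c))/(P\otimes\K(c))$ has finite GK-dimension (by Remark~\ref{rem:locVir}), which is perfectly consistent with $P\cap\K[c]=0$; nothing in Theorem~\ref{ithm:U} applies to $U(\V)/P$ itself in that case. The paper does \emph{not} attempt to rule this case out. Instead, it splits into two cases. If some $P_n$ meets $\K[c]$ nontrivially, then (since $P_n$ is prime and $c$ is central) $P_n$ contains an irreducible $f(c)$; writing $\K'=\K[x]/(f(x))$ one has $U(\V)/(f(c))\cong U_{\K'}(\V)/(c-x)$, which has just infinite GK-dimension by Theorem~\ref{ithm:U}, and the chain terminates as above. (Note that over a non--algebraically closed $\K$ one gets an irreducible $f(c)$, not necessarily a linear $c-\kappa$, so your ``two distinct values'' remark also needs this base-change step.) If instead every $P_n$ satisfies $P_n\cap\K[c]=0$, then each $U(\V)/P_n$ is $\K[c]$-torsionfree, so a strict inclusion $P_n\subsetneq P_{n+1}$ remains strict after tensoring with $\K(c)$, and the resulting ideals are still completely prime. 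One then applies the first paragraph to the $\K$-algebra $U(\V)\otimes_{\K[c]}\K(c)$, which by Remark~\ref{rem:locVir} has just infinite GK-dimension. This localisation argument is what replaces your unproven reduction.
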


\begin{proposition}\label{iprop:Hopfian}
{\rm (See Proposition~\ref{prop:HB}.)}
Let $U(\mc W)$ be either $U(W_+)$, $U(\V)$, or $U(\V)/(c-\kappa)$ for some $\kappa \in \KK$.
Then any epimorphism of $U(\mc W)$ is an isomorphism.
\end{proposition}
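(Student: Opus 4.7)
The plan is to combine Theorem~\ref{ithm:U} (just infinite GK-dimension) with the intermediate growth of the algebras involved. Let $\phi \colon U(\mc W) \to U(\mc W)$ be an epimorphism and set $I := \ker\phi$, so that $U(\mc W)/I \cong U(\mc W)$; the goal is to show $I = 0$. For $U(\mc W) \in \{U(W_+),\, U(\V)/(c-\kappa)\}$ this is immediate: Theorem~\ref{ithm:U} says $U(\mc W)$ has just infinite GK-dimension, so if $I \ne 0$ then $U(\mc W)/I$ would have polynomial growth, contradicting the intermediate growth of $U(\mc W) \cong U(\mc W)/I$ recorded in the introduction.

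The case $U(\mc W) = U(\V)$ is more delicate, since $U(\V)$ is \emph{not} itself just infinite GK: the quotient $U(\V)/(c) \cong U(W)$ still has infinite GK-dimension. Here I would exploit the central polynomial subalgebra $\KK[c]$. Since $c$ is central, $\phi(c) = p(c)$ for some $p \in \KK[c]$. For every $\kappa \in \KK$, the composition $U(\V) \xrightarrow{\phi} U(\V) \twoheadrightarrow U(\V)/(c-\kappa)$ sends $c - p(\kappa)$ to $p(c) - p(\kappa) \in (c - \kappa)$, hence factors through $U(\V)/(c - p(\kappa))$. Applying Theorem~\ref{ithm:U} to $U(\V)/(c - p(\kappa))$, combined with the intermediate growth of the target $U(\V)/(c-\kappa)$, forces the induced surjection $U(\V)/(c - p(\kappa)) \twoheadrightarrow U(\V)/(c-\kappa)$ to be an isomorphism; equivalently, $\phi^{-1}((c-\kappa)) = (c - p(\kappa))$. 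Therefore
\[ I \;\subseteq\; \bigcap_{\kappa \in \KK} (c - p(\kappa)). \]

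If $p$ is nonconstant, $\{p(\kappa) : \kappa \in \KK\}$ is an infinite subset of $\KK$, and a PBW argument---using that $U(\V)$ is free as a $\KK[c]$-module with basis the ordered monomials in the $e_n$---yields $\bigcap_{a \in S}(c - a) = 0$ for any infinite $S \subseteq \KK$, so $I = 0$. The main remaining obstacle is the constant case $p \equiv \kappa_0$, in which one must rule out $U(\V)/(c-\kappa_0) \cong U(\V)$. Here $\phi$ factors as $U(\V) \twoheadrightarrow U(\V)/(c-\kappa_0) \xrightarrow{\bar\phi} U(\V)$. If $\ker\bar\phi \ne 0$, Theorem~\ref{ithm:U} applied to $U(\V)/(c-\kappa_0)$ gives that the image $U(\V)$ has polynomial growth, a contradiction. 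If instead $\bar\phi$ is an isomorphism, then for any $\kappa_1 \ne \kappa_0$ the ideal $\bar\phi^{-1}((c-\kappa_1))$ is a nonzero proper ideal of $U(\V)/(c-\kappa_0)$ equal to the kernel of the surjection $U(\V)/(c-\kappa_0) \twoheadrightarrow U(\V)/(c-\kappa_1)$, which by just infinite GK-dimension of the source and intermediate growth of the target must be zero---again a contradiction.
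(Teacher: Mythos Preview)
Your argument for $U(W_+)$ and $U(\V)/(c-\kappa)$ is exactly the paper's: just infinite GK-dimension (Theorem~\ref{ithm:U}) combined with the intermediate growth of $U(\mc W)$ rules out any nonzero kernel.

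For $U(\V)$ you take a genuinely different route. The paper localises: tensoring the exact sequence $0\to J\to U(\V)\to U(\V)/J\to 0$ with $\KK(c)$ over $\KK[c]$ and invoking Remark~\ref{rem:locVir} (that $U(\V)\otimes_{\KK[c]}\KK(c)$ has just infinite GK-dimension as a $\KK$-algebra), one gets $J\otimes_{\KK[c]}\KK(c)=0$, whence $J=0$ by torsion-freeness of $U(\V)$ over $\KK[c]$. Your fibrewise argument instead shows $\phi^{-1}\bigl((c-\kappa)\bigr)=(c-p(\kappa))$ for each $\kappa$ and then intersects. This is more elementary---it uses only Theorem~\ref{ithm:U} and avoids the extra input of Remark~\ref{rem:locVir}---and it handles the degenerate case $\phi(c)\in\KK$ explicitly, a case the paper's torsion-freeness assertion passes over rather quickly. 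One point to tighten: the step ``since $c$ is central, $\phi(c)=p(c)$ for some $p\in\KK[c]$'' needs the fact that $Z(U(\V))=\KK[c]$, not merely that $\phi$ sends central elements to central elements; this is standard but should be stated or cited.
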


This answers a question of Rowen and Small \cite[Section~4]{RS}.


The Witt algebra $W$ is a simple, graded, Lie algebra of polynomial growth.
Such algebras were famously classified by Mathieu \cite{Mathieu}.
It is interesting to ask which of these Lie algebras have enveloping algebras with just infinite GK-dimension.
This is the subject of  ongoing research.

\vspace{1em}

\noindent {\bf Methods:}
As notation, we will write the symmetric algebra $S(W_+) = \K[x_1, x_2, \dots]$, where $x_i$ corresponds to $e_i \in W_+$.
Likewise,
\[S(W) = \K[\dots, x_{-1}, x_0, x_1, \dots] \quad \mbox{and} \quad S(\V) = \KK[\dots, x_{-1}, x_0, x_1, \dots, c].\]

The main idea in the proofs of both Theorems~\ref{ithm:U} and \ref{ithm:S} is to show that if $g$ is a nonzero element of $S(W_+)$ or $S(W)$, then for `almost all' monomials $m$ in the $x_i$ (more precisely, for long enough monomials on big enough letters), the Poisson ideal generated by $g$ contains an element with leading term $m$. (See Lemma~\ref{lemmaP1}.)
In the  same way, we show that if $g$ is a nonzero element of $U(W_+)$ or $U(W)$, then for almost all monomials $m$ in the variables $e_i$, the two-sided ideal  generated by $g$ contains an element with leading term $m$. (See Lemmata~\ref{lem:newreduction} and \ref{lem:NS}.)

We summarise the argument for $S(W_+)$.  Let $I$ be the Poisson ideal generated by $0 \neq g \in S(W_+)$.
We introduce the following ordering on monomials in the $x_i$.
Denote by  $\deg(m)=i_1+{\dots}+i_k$  the {\it degree} of a monomial $m=x_{i_1}\dots x_{i_k}$, and
let the  {\it length} of $m$ be
$|m|=|x_{i_1}\dots x_{i_k}|=k$.

Then the ordering on the set  of
commutative monomials in the $x_i$ is defined as follows. For two monomials $m_1$ and $m_2$, we write $m_1<m_2$ if
\begin{itemize}
\item $|m_1|<|m_2|$ or
\item $|m_1|=|m_2|$ and $\deg(m_1)<\deg(m_2)$ or
\item $|m_1|=|m_2|$, $\deg(m_1)=\deg(m_2)$ and $m_1$ is less than $m_2$ with respect to the left-to-right
lexicographical order when both $m_1$ and $m_2$ are written in increasing order: $m_1=x_{i_1}\dots x_{i_k}$,
$m_2=x_{j_1}\dots x_{j_k}$ with $i_1\leq i_2\leq\dots\leq i_k$, $j_1\leq j_2\leq\dots\leq j_k$.
\end{itemize}

We show in Lemma~\ref{lemmaP1}
that all sufficiently long  monomials on sufficiently `big' letters
can be written,  modulo $I$, as a sum of smaller  monomials.
Here a
 letter is called {\it big} if it is bigger than
$n= \{ \max (2i+1) \, | \, x_i$ occurs in $g$\}.
By this means we are able to introduce  a `normal form' for monomials from $S(W_+)$. Namely, any element of $S(W_+)$ can be written, modulo $I$,  as a linear combination  of  monomials
of the form $m = uv$,
where $u$ is a monomial on the finite set of letters $ x_1,\dots x_{n-1}$, and $ v$  is a monomial  of restricted length on the set of big letters.
  A similar normal form works for $U(W_+)$.

In the case of $W$ (more generally, central quotients of $\V$) the normal form is slightly different.
Let $0 \neq J \triangleleft U(W)$.  Any element of $U(W)$ can be written, modulo $J$, as a linear combination of monomials $m = u_1vu_2$,
where $v$ is a monomial on the finite set of letters $ e_{1-n},\dots e_{n-1}$,  $ u_1$ is a monomial of bounded length on letters smaller than $e_{-n}$, and $u_2$ is a monomial of bounded length on letters larger than $e_n$.

Counting  the growth of these normal monomials  gives us a polynomial estimate which bounds the growth of the quotient algebra.
In the case of the full Witt algebra the  growth counting is somewhat more involved, since the usual degree function ${\rm deg }(u)=i_1+\dots +i_k$ will not supply us  a finite filtration on $U(W)/J$.
See Section~\ref{filt} for the details of how this issue is resolved.

\vspace{1em}

\noindent {\bf Notation:}
Throughout we fix the following notation.  We denote the set of non-negative integers by $\NN$.
If $R$ is a ring and $g \in R$, the two-sided ideal generated by $g$ is denoted $(g)$.
If $R$ is a Poisson algebra, the Poisson ideal generated by $g$ is denoted $\{(g)\}$.

Let $S({\mathcal W})$ denote either $S(W_+)$ , $S(\V)$, or   $S(\V)/(c-\kappa)$ for some $\kappa \in \KK$.
Likewise, let  $U(\mc W)$ be either $U(W_+)$, $U(\V)$,  or $U(\V)/(c-\kappa)$.
Our convention is that $e_1, e_2, \dots$ denote elements of $U(\mathcal{W})$, and that $x_1, x_2, \dots$ are the corresponding elements of $S(\mathcal{W})$.
Monomials  $x_{i_1}x_{i_2}\dots x_{i_k} \in S(\mc W)$  are usually  written in increasing order (if not specified otherwise), that is  $i_1\leq i_2 \leq \dots  \leq i_k$. The same shape  $e_{i_1}e_{i_2}\dots e_{i_k} $, where $i_1\leq i_2 \leq  \dots  \leq i_k$,  of a monomial is considered normal in $U({\mathcal W})$, and such a monomial is called  {\em standard}.

\section{The symmetric algebra of the positive Witt algebra}\label{P}

Our main goal in this section is to prove the following theorem, which gives the first statement of Theorem~\ref{ithm:S}:

\begin{theorem}\label{theoremP}
Let $I$ be a nonzero Poisson ideal of $S(W_+)$.
 Then $A=S(W_+)/I$ has
polynomial growth.
\end{theorem}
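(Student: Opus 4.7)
The plan is to exploit the Poisson bracket $\{x_i,x_j\}=(j-i)x_{i+j}$ on $S(W_+)$ to show that, given any nonzero $g\in I$, the Poisson ideal $\{(g)\}$ already contains elements with any prescribed leading monomial from a sufficiently large set, and then to read off a polynomial-growth normal form for $A=S(W_+)/I$. Fix $0\neq g\in I$ with leading monomial $m_0=x_{i_1}\cdots x_{i_k}$ in the length--degree--lex order from the introduction, and set $n:=\max\{2i+1:\,x_i\text{ appears in }g\}$.

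The main technical step is the reduction lemma (Lemma~\ref{lemmaP1}): for every monomial $m$ of length $\ge k$ all of whose variables have index $\ge n$, the Poisson ideal $\{(g)\}$ contains an element whose leading monomial is exactly $m$. To prove it, I would bracket $g$ iteratively with variables $x_{j_1},x_{j_2},\ldots,x_{j_s}$ having suitably large and well-spaced indices; since $\{x_j,\cdot\}$ satisfies Leibniz and preserves length while shifting degree by $j$, each such bracket remains a length-$k$ combination whose leading term I can track. Then multiplying by any commutative monomial in $S(W_+)$ extends this to any longer monomial on big letters. The \emph{main obstacle}, and the delicate point of the argument, is a non-cancellation statement: the Leibniz expansion of an iterated bracket produces many terms of the same length and degree, and one has to choose the $j_t$ carefully (both in magnitude and in the order in which they are applied) so that the intended leading monomial does not cancel. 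The threshold $n=\max(2i+1)$ is precisely what guarantees that the surviving term is strictly larger, in the lex refinement, than every other term generated by the bracket.

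Granted the reduction lemma, any monomial of $S(W_+)$ can be rewritten modulo $I$ as a linear combination of strictly smaller monomials with respect to the well-founded length--degree--lex order; iterating this rewriting until no further reduction is possible shows that every class in $A$ is represented by a linear combination of \emph{normal} monomials of the form $uv$, where $u\in\K[x_1,\dots,x_{n-1}]$ and $v$ is a monomial of length $<k$ on the variables $\{x_n,x_{n+1},\ldots\}$.

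Finally, to bound $\GK(A)$ I would weight $x_i$ by $i$ and count normal monomials of weight $\le N$: the $u$-factor ranges over a polynomial ring in the $n-1$ variables $x_1,\dots,x_{n-1}$, contributing $O(N^{n-1})$ choices, while the $v$-factor is an unordered tuple of length at most $k-1$ from $\{x_n,x_{n+1},\dots\}$ of total weight $\le N$, contributing $O(N^{k-1})$ choices. The product is a polynomial in $N$, so $A$ has polynomial growth, proving Theorem~\ref{theoremP}. The same framework will then be adapted in subsequent sections to $U(W_+)$ (replacing Poisson brackets by commutators, with a standard-monomial PBW form playing the role of commutative monomials) and, with a suitable two-sided variant, to $U(W)$ and $U(\V)/(c-\kappa)$.
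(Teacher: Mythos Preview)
Your outline matches the paper's proof: the reduction lemma (Lemma~\ref{lemmaP1}), the normal form $m=uv$ with $u\in\K[x_1,\dots,x_{n-1}]$ and $|v|<k$, and the polynomial count are exactly what is used. Two points need tightening before the plan becomes a proof.

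First, you must reduce to degree-homogeneous $g$ at the outset. Since $S(W_+)$ is finitely $\N$-graded by degree and the Poisson bracket is graded, one may pass to the associated graded ideal of $I$ for the degree filtration, which is again a nonzero Poisson ideal, and this does not change GK-dimension; so one may take $g$ homogeneous. Without this step your reduction can replace a monomial $m$ by monomials of strictly \emph{larger} degree (a shorter monomial is $<m$ in your length--degree--lex order regardless of its degree), and then the count of normal monomials of weight $\le N$ no longer bounds $\dim A_{\le N}$. The paper makes this reduction explicitly before invoking Lemma~\ref{lemmaP1}.

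Second, ``suitably large and well-spaced indices'' is not the mechanism, and is a bit misleading; no spacing condition is used. For a target $m=x_{j_1}\cdots x_{j_k}$ with $n\le j_1\le\cdots\le j_k$, and with $\overline g=x_{i_1}\cdots x_{i_k}$ the $\prec$-leading monomial of $g$, the paper applies precisely the $k$ derivations $d_{j_1-i_k}d_{j_2-i_{k-1}}\cdots d_{j_k-i_1}$ to $g$. The non-cancellation then becomes a positivity argument: every term contributing to the coefficient of $m$ is a product of factors $j_\ell-i_r-i_s$ with $i_r+i_s\le 2i_k<n\le j_\ell$, hence strictly positive, so these contributions cannot cancel. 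It is this explicit matching of derivations to the letters of $\overline g$, together with the threshold $n\ge 2i_k+1$, that makes the leading-term bookkeeping go through.
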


Let $X=\{x_1,x_2,\dots\}$, so $S(W_+) = \K[X]$, with Poisson bracket induced by defining $\{x_i, x_j\} = (j-i) x_{i+j}$.
We equip $S(W_+)$ with the
degree grading given by setting  $\deg(x_j)=j$.
Thus the {\it degree} of a monomial $m=x_{i_1}\dots x_{i_k}$ is
$\deg(m)=i_1+{\dots}+i_k$. Of course, we  have the natural grading by the {\it length} of monomials as well,  which we write as
$|x_{i_1}\dots x_{i_k}|=k$.

The key technique in the proof  is the following {\em reduction formula} for elements of $A = S(W_+)/I$ in the case that $I=\{(g)\}$ is the Poisson ideal generated by a single, nonzero, degree-homogeneous polynomial $g$.

\begin{lemma}\label{lemmaP1} Let $I=\{(g)\}$ be the Poisson ideal in $\K[X]=S(W_+)$ generated by a nonzero degree-homogeneous polynomial $g\in\K[X]$.
There exist positive integers $k$ and $n$ such that every
monomial $m=x_{j_1}\dots x_{j_k}$ such that $j_\ell\geq n$ for $1\leq \ell \leq k$ satisfies
\beq\label{RF}
m=h+\sum c_s m_s,
\eeq
where $h\in I$ is degree-homogeneous with $\deg(h) = \deg(m)$, the sum is finite, $c_s\in\K$, and the $m_s$ are monomials of degree $\deg(m)$ such that for each $s$, either $|m_s|<k$ or
$|m_s|=k$ and $i<n$ for at least one of the letters $x_i$ featuring in $m_s$.
\end{lemma}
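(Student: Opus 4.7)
The plan is to construct, for each eligible $m = x_{j_1}\cdots x_{j_k}$, an explicit element $h \in I$ whose expansion equals $m$ plus a $\K$-linear combination of monomials satisfying the conclusion; the formula \eqref{RF} then follows by rearranging. The element $h$ will be built from $g$ using the two operations under which $I$ is closed: multiplication by elements of $S(W_+)$, and Poisson bracketing against elements of $S(W_+)$.

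Setup. Let $p$ denote the maximum length of a monomial in $g$, fix a length-$p$ monomial $m_0 = x_{i_1}\cdots x_{i_p}$ of $g$ with nonzero coefficient $c_0$ (chosen to be $<$-maximal among the length-$p$ monomials of $g$), and let $n_0 := \max\{i : x_i$ occurs in $g\}$. I will first establish the reduction for $k = p$ and $n$ sufficiently large. The case $k > p$ then follows by multiplying the reducer by the extra factors $x_{j_{p+1}}\cdots x_{j_k}$: this keeps the reducer inside $I$ and, since all extra factors are big, preserves the acceptable-error property of each $m_s$.

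Construction and term analysis. For $m = x_{j_1}\cdots x_{j_p}$ with each $j_\ell \geq n$, consider
\beq
H := \bigl\{\cdots\bigl\{g,\, x_{s_1}\bigr\},\,\cdots,\,x_{s_p}\bigr\} \in I,
\eeq
with $s_a := j_a - i_a$. Since $\{-, x_s\}$ is a length-preserving derivation, expanding $H$ by the Leibniz rule gives a sum in which at step $a$ a single factor $x_i$ of the current monomial is replaced by $(s_a - i) x_{s_a + i}$. Monomials of $g$ of length $<p$ contribute length-$<k$ terms, which are acceptable. Among length-$p$ contributions, any summand that at some step fails to hit a factor $x_{i_a}$ of $m_0$ leaves that small letter $x_{i_a}$ intact, and so is also acceptable. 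The remaining summands come from ``bijective paths'' that hit each factor of some length-$p$ monomial of $g$ exactly once. The identity path on $m_0$ contributes $m$ with coefficient $c_0 \prod_a(j_a - 2 i_a)$, nonzero for $n > 2n_0$. All other bijective paths -- non-identity permutations of $m_0$, and paths on other length-$p$ monomials of $g$ -- produce length-$p$ monomials whose letters are ``spreads'' $j_a + \delta_a$, with $\sum_a \delta_a = 0$ and $|\delta_a| \leq 2 n_0$; these are still big in every letter and so do not directly satisfy the conclusion.

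Main obstacle and strategy. Disposing of these bijective-path spread monomials is the technical heart of the argument. My plan is a Noetherian induction on the set of length-$p$, degree-$\deg(m)$, all-big monomials, under a well-founded order designed so that every spread produced in $H$ is strictly $m$-smaller. For $(j_1, \ldots, j_p)$ well-separated, a dispersion statistic such as $\sum_a j_a^2$ together with a rearrangement-inequality argument shows that each permutation spread strictly decreases the statistic -- using the sorted order of $m_0$'s indices and the fact that for $n$ large the linear cross-term $2\sum \delta_a j_a$ dominates the bounded quadratic correction $\sum \delta_a^2$. The $<$-maximality of $m_0$ among length-$p$ monomials of $g$, combined with the degree-homogeneity of $g$, plays the same role for spreads coming from other length-$p$ monomials. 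Under this induction, every strictly smaller big monomial appearing in $H$ has already been reduced modulo $I$ to acceptable error, and substituting back into $H$ expresses $m$ itself modulo $I$ in the required form. The delicate case is when the $j_\ell$'s are not well-separated (for instance, several coincide), where a naive dispersion statistic can fail: this requires refining the order by an auxiliary tie-breaker or preprocessing $m$ via further Poisson brackets that separate coincident indices. The threshold $N(g)$ on $n$ is then fixed by the non-degeneracies $j_a \neq 2 i_a$ together with these ordering requirements, and $k$ may be taken equal to $p$.
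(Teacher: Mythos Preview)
Your construction of $H$ and the classification into short, non-bijective, and bijective-path terms is exactly right, but the Noetherian induction you propose to dispose of the spread monomials has a genuine gap that you yourself flag and do not close. The dispersion statistic $\sum_a j_a^2$ can \emph{increase} under a permutation spread: with $g$ having leading monomial $x_1x_2$ and $m = x_j^2$, your pairing $s_1 = j-1$, $s_2 = j-2$ applied via the swap path yields $x_{j-1}x_{j+1}$, whose dispersion $2j^2+2$ exceeds that of $m$. More generally, the rearrangement cross-term $2\sum_a j_a\delta_a$ vanishes whenever the $j_a$ coincide on the support of $\delta$, and then the positive correction $\sum_a\delta_a^2$ pushes the wrong way. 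No tie-breaker repairs this, since the spread is strictly \emph{larger} in the proposed order; and ``preprocessing by further Poisson brackets'' changes $\deg(m)$, so it does not fit the fixed-degree induction.

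The paper sidesteps this entirely by two coupled choices: take $\overline g$ to be the leading length-$k$ monomial for the order $\prec$ (largest-letter-first lex), and use the \emph{reversed} pairing $s_\ell = j_\ell - i_{k+1-\ell}$, so that $h = d_{j_1-i_k}d_{j_2-i_{k-1}}\cdots d_{j_k-i_1}(g)$. Then for any length-$k$ monomial $u = x_{p_1}\cdots x_{p_k}$ of $g$ and any bijective path $\sigma$, the smallest letter of the resulting spread is $x_{j_1 - i_k + p_{\sigma(k)}}$; since $u \preceq \overline g$ forces $p_{\sigma(k)} \leq p_k \leq i_k$, this index is $\leq j_1$, with equality only when $p_{\sigma(k)} = p_k = i_k$. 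Iterating letter by letter shows every spread is $\leq m$ in the order $<$ (smallest-letter-first lex), with equality only for $u = \overline g$ and $\sigma$ fixing the multiset of $i$'s. This gives the strict decrease you need directly, with no separation hypothesis and no auxiliary statistic; your threshold $n > 2n_0$ already suffices.
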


To prove Lemma~\ref{lemmaP1}, we introduce the following two orderings on the set $[X]$ of
commutative monomials in $X$. For two monomials $m_1$ and $m_2$, we write $m_1<m_2$ if
\begin{itemize}
\item $|m_1|<|m_2|$ or
\item $|m_1|=|m_2|$ and $\deg(m_1)<\deg(m_2)$ or
\item $|m_1|=|m_2|$, $\deg(m_1)=\deg(m_2)$ and $m_1$ is less than $m_2$ with respect to the left-to-right
lexicographic order when both $m_1$ and $m_2$ are written in {\em increasing} order: $m_1=x_{i_1}\dots x_{i_k}$,
$m_2=x_{j_1}\dots x_{j_k}$ with $i_1\leq i_2\leq\dots\leq i_k$, $j_1\leq j_2\leq\dots\leq j_k$.
\end{itemize}
Similarly, we write $m_1\prec m_2$  if
\begin{itemize}
\item $|m_1|<|m_2|$ or
\item $|m_1|=|m_2|$ and $\deg(m_1)<\deg(m_2)$ or
\item $|m_1|=|m_2|$, $\deg(m_1)=\deg(m_2)$ and $m_1$ is less than $m_2$ with respect to the left-to-right
lexicographical order when both $m_1$ and $m_2$ are written in {\em decreasing} order: $m_1=x_{i_1}\dots x_{i_k}$,
$m_2=x_{j_1}\dots x_{j_k}$ with $i_k\leq i_{k-1}\leq\dots\leq i_1$, $j_k\leq j_{k-1}\leq\dots\leq j_1$.
 (Equivalently, we may write $m_1$ and $m_2$ in increasing order and compare them with the right-to-left lexicographic order.)
\end{itemize}
 Note, that in  the ordering $<$ we compare the smallest letters first, and in the  ordering $\prec$ we compare the biggest letters first.
It is easy to see that both orderings are well-orderings on $[X]$ compatible with multiplication.

\begin{proof}[Proof of Lemma~\ref{lemmaP1}]
Our ordering $<$ satisfies the descending chain condition, and thus
Lemma~\ref{lemmaP1} can  easily be obtained by repeated application of the following sublemma:

 \medskip

\noindent {\bf Sublemma: }
There exist positive integers $k$ and $n$ such that every
monomial $m\in [X]$  of length $k$ with $m\geq x_n^k$ satisfies
$$
m=h+\sum c_s m_s,
$$
where $h\in I$ is degree-homogeneous with $\deg(h) = \deg(m)$, the sum is finite, $c_s\in\K$, and the $m_s$ are monomials of degree $\deg(m)$
such that $m_s<m$ for each $s$.

\medskip

To prove the sublemma, let $\overline{g}$ be the leading monomial of $g$ with respect to $\prec$, and let $ k = |\overline{g}| = |g|$.
 Without loss of generality
we can assume that $\overline{g}$ features in $g$ with coefficient $1$. We write $\overline g$
in an increasing way: $\overline{g}=x_{i_1}\dots x_{i_k}$ with $i_1\leq i_2\leq\dots\leq i_k$. Pick any positive integer $n$ such that $n\geq 2i_k+1$.
We shall show that these $n$ and $k$ satisfy the required conditions.

Let $m\in [X]$  of length $k$ be such that $m\geq x_n^k$.
Then $m=x_{j_1}\dots x_{j_k}$ with
$n\leq j_1\leq\dots\leq j_k$.
If $a \in \ZZ_{\geq 1}$, let
  $d_a:\K[X]\to \K[X]$ be the derivation defined by $d_a(u)=\{u, x_a\}$, extending via the Leibniz rule.
  Note that  $d_a$ is a graded derivation:  if applied to a degree-homogeneous polynomial  $f$, then   $d_a(f)$ is degree-homogeneous of degree $\deg (d_a(f) )= \deg(f)+a$.

  Consider
$$
h=d_{j_1-i_k}d_{j_2-i_{k-1}}\dots d_{j_k-i_1}(g).
$$
Since $g \in I$ and $I$ is  a Poisson ideal, $h\in I$.
Further, as the $d_a$ are graded and $g$ is degree-homogenous, so is $h$.
The proof will be complete if we verify that
$$
h=cm+\sum{c_s}m_s,
$$
where $c\neq0$, $c_s\in\K$, and the  $m_s$ are monomials such that $m_s<m$ for each $s$.

Let us apply the sequence of derivations
$d_{j_1-i_k}d_{j_2-i_{k-1}}\dots d_{j_k-i_1}$ to a monomial $u$ occurring in $g$.
By  the Leibniz rule
we get a sum of monomials with coefficients obtained by prescribing which of the derivations
acts on each letter of $u$.

Note  (assuming that $d_a(u) \neq 0$) that $d_a(u)$ has the same length as $u$.
Thus monomials in $h$ obtained from monomials in $g$ of length $<k$ are themselves of length $<k$ and therefore are smaller
than $m$ with respect to $<$.

Suppose now that $u$ has
length $k$.
Then there are two options: either
different differentials act on letters in different places in the monomial $u$ or this is not the case.
We call the
first of these ways {\it permutational} and the second {\it non-permutational}.
Monomials in $h$ obtained from $u$ in a non-permutational way will have at least one letter unchanged and therefore will have at least one letter
$x_i$ with $i\leq i_k<n$. Hence such monomials of $h$ are again  smaller than $m$ with respect to $<$.

It remains to
consider monomials of $h$ obtained from monomials  in $g$ of length $k$ in a
permutational way. For each  monomial  $u=x_{p_1}\dots x_{p_k}$ with $p_1\leq \dots\leq p_k$ occurring in $g$ and each permutation $\sigma\in \mf{S}_k$, we obtain a monomial $w$ of   $h$ given by
\[w=x_{j_1-i_k+p_{\sigma(k)}}\dots x_{j_k-i_1+p_{\sigma(1)}},\]
occurring with coefficient
\[\prod_{\ell=1}^k (j_\ell - i_{k-\ell+1}- p_{\sigma(k-\ell+1)}).\]
Since $u\preceq\overline{g}$, thus $p_{\sigma(k)}\leq p_k\leq i_k$. Hence $j_1-i_k+p_{\sigma(k)}\leq j_1$ and the equality
$j_1-i_k+p_{\sigma(k)}=j_1$ holds if and only if $p_{\sigma(k)}=p_k=i_k$. If $j_1-i_k+p_{\sigma(k)}<j_1$, then
$w<m$ and we are done. If $j_1-i_k+p_{\sigma(k)}=j_1$, we have $p_{\sigma(k)}=p_k=i_k$. Since $u\preceq\overline{g}$ and
$p_{\sigma(k)}=p_k=i_k$, we have $p_{\sigma(k-1)}\leq p_{k-1}\leq i_{k-1}$. Hence $j_2-i_{k-1}+p_{\sigma(k-1)}\leq j_2$ and the equality
$j_2-i_{k-1}+p_{\sigma(k-1)}=j_2$ holds if and only if $p_{\sigma(k-1)}=p_{k-1}=i_{k-1}$.
Repeating  the procedure,
we see that  $w\leq m$  and that $w=m$ only if $u=\overline{g}$ and the permutation $\sigma$ satisfies
$i_{\sigma(s)}=i_s$ for $1\leq s\leq k$.

Now, for each such $\sigma$, since $n\geq 2i_k+1$, we claim that the coefficient of the monomial $m$ is a positive integer.
Indeed, the coefficient is a product of  factors  of the form $j_\ell-i_{k-\ell+1}-p_{\sigma(k-\ell+1)}$, which  are positive  since $i_{k-\ell+1}+ p_{\sigma(k-\ell+1)}\leq 2i_k <n \leq j_\ell$.
Thus the coefficient with which $m$ occurs in  $h$ is nonzero.   The other monomials in $h$ are $<m$. This completes the proof of the sublemma and thus of the lemma.
\end{proof}

Note that we used in this proof only that $I$ is a module over $\K$ defined by the bracket multiplication (in other words a submodule of $S(W_+)$ under the adjoint action of $W_+$).

We now prove Theorem~\ref{theoremP}.  The key point of the proof is that, thanks to Lemma~\ref{lemmaP1}, $A = S(W_+)/I$ is spanned by the set
of monomials $m$ in $[X]$ which admit a factorisation $m=m_1m_2$, where
$m_1$ is a  monomial in $x_1,x_2,\dots,x_{n-1}$ and $|m_2|<k$.  Thus to estimate the growth of $A$ it suffices to count such monomials.

\begin{proof}[Proof of Theorem~\ref{theoremP}.]
As $S(W_+)$ is finitely graded by degree, it is standard (see \cite[Proposition~6.6]{KL}) that it suffices to show that $S(W_+)/I$ has polynomial growth if $I$ is a nonzero degree-graded Poisson ideal, and it clearly suffices to consider the case that $I $ is the Poisson ideal generated by a  single nonzero degree-homogenous element $g$.
Let $k$ and $n$ be the numbers produced by applying Lemma~\ref{lemmaP1} to $g$, and let
  $S$ be the set of all  monomials $m$ in $[X]$ which admit a factorisation $m=m_1m_2$, where
$m_1$ is a  monomial in $x_1,x_2,\dots,x_{n-1}$ and $|m_2|<k$.

By Lemma~\ref{lemmaP1},
each monomial $m\in[X]\setminus S$ can be  written, modulo $I$, as a linear combination of monomials of degree $\deg(m)$,
each of which either has length strictly less than $|m|$ or has length $|m|$ and features strictly fewer $x_i$ with
$i\geq n$. Applying this observation repeatedly, we see that every monomial $m\in[X]\setminus S$
can be written,  modulo $I$, as a linear combination of monomials from $S$ of the same degree as $m$.
We will call such presentation a {\it normal form} of $m$.
 Hence the image of $S$ in $A=\K[X]/I$
spans $A$, and for fixed $N \in \NN$, the number of monomials in $S$ of degree not exceeding $N$  provides an upper bound for $\dim \{u \in A \ |\  \deg u \leq N\}$.
As $A$ is finitely $\NN$-graded by degree, it  is standard that the growth of this dimension bounds $\GK A$.

It remains to estimate the number $p(N)$ of elements of $S$ of degree at most $N$. Clearly $p(N)\leq q(N)r(N)$, where
$q(N)$ is the number of monomials in $x_1,\dots,x_{n-1}$ of degree at most $N$ and $r(N)$ is the number of monomials  in $x_1,x_{2},\dots$ of degree
at most $N$ and length at most $k-1$.
First, $q(N)$ does not exceed the number
of monomials in $x_1,\dots,x_{n-1}$ of length at most $N$, which is $\left({N+n\atop N}\right)$. Thus there is a positive constant $c$ so that $q(N)\leq cN^n$ for all $N$.
On the other hand, in a degree at most $N$ monomial of length at most $k-1$ in $x_n,x_{n+1},\dots$ there are no more
than $N$ options for each letter and therefore  $r(N)\leq N^{k-1}$ for all $N$.
Hence $p(N)\leq cN^{k+n-1}$.

Hence $\GK(A)\leq k+n-1$, and $A$ has polynomial growth,
as required.
\end{proof}

We note that Theorem~\ref{theoremP} is also proved in \cite{PS} (see Corollary~2.13), with a much less constructive proof.

To end the section, we fix a positive integer $k$ and consider the symmetric power $S^k(W_+)$. Let $g \in S^k(W_+)$ be a nonzero degree-homogeneous element, and let $I$ be the $W_+$-submodule of $S^k(W_+)$ generated by $g$.  As noted after the proof of Lemma~\ref{lemmaP1}, the reduction formula in Lemma~\ref{lemmaP1} still applies to $S^k(W_+)/I$, and the argument in the proof of Theorem~\ref{theoremP} now gives that $\GK S^k(W_+)/I \leq k-1$.  As $S^k(W_+)$ clearly has GK-dimension $k$, we obtain:
\begin{proposition}\label{prop:Sk}
As a $W_+$-module, $S^k(W_+)$ is GK $k$-critical. \qed
\end{proposition}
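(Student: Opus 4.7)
The plan is to verify both defining conditions of GK $k$-criticality for $S^k(W_+)$ as a $W_+$-module: first, that $\GK S^k(W_+) = k$; and second, that $\GK S^k(W_+)/I \leq k-1$ for every nonzero $W_+$-submodule $I$. The first is a straightforward partition count, and the second is a refinement of the growth estimate from the proof of Theorem~\ref{theoremP}, in which the restriction to monomials of length exactly $k$ forces the normal-form prefix in $x_1,\ldots,x_{n-1}$ to have bounded length.

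For $\GK S^k(W_+) = k$, the standard monomials $x_{i_1}\cdots x_{i_k}$ with $i_1 \leq \cdots \leq i_k$ form a $\K$-basis of $S^k(W_+)$, and the number of such of degree at most $N$ equals the number of partitions of integers $\leq N$ into at most $k$ parts, which grows as $\Theta(N^k)$. Hence the degree filtration has polynomial growth rate exactly $k$, giving $\GK S^k(W_+) = k$.

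For the critical upper bound, by the associated-graded reduction at the start of the proof of Theorem~\ref{theoremP}, I may assume that $I$ is a degree-graded $W_+$-submodule generated as such by a single nonzero degree-homogeneous $g \in S^k(W_+)$. The remark following Lemma~\ref{lemmaP1} ensures the reduction formula applies in this setting. The key observation is that each derivation $d_a = \{-,x_a\}$ preserves monomial length (since $d_a(x_p) = (a-p)x_{a+p}$), so the auxiliary element $h$ constructed in the lemma's proof lies in $S^k(W_+)$ and every monomial appearing in (\ref{RF}) has length exactly $k$. Thus Lemma~\ref{lemmaP1} yields an integer $n$ such that every standard monomial $x_{j_1}\cdots x_{j_k}$ with $j_1 \geq n$ is congruent modulo $I$ to a linear combination of standard monomials of the same degree having at least one letter $x_i$ with $i < n$. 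Iterating, $S^k(W_+)/I$ is spanned by standard monomials $x_{i_1}\cdots x_{i_k}$ with $i_1 \leq n-1$; the count of such monomials of degree at most $N$ is bounded by $(n-1)\cdot O(N^{k-1}) = O(N^{k-1})$, so $\GK S^k(W_+)/I \leq k-1$. The main obstacle, the reduction formula itself, is already handled by Lemma~\ref{lemmaP1}; the only delicate point is verifying that the formula restricts to $S^k(W_+)$, which follows from the length-preserving nature of the bracket with a single $x_a$.
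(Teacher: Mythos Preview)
Your proof is correct and follows essentially the same approach as the paper. The paper's argument is terse---it simply observes that the reduction formula of Lemma~\ref{lemmaP1} applies (since it only uses the $W_+$-module structure) and that the counting from Theorem~\ref{theoremP} then yields $\GK S^k(W_+)/I \leq k-1$---whereas you spell out explicitly why the bound drops to $k-1$: the length-preservation of each $d_a$ forces all monomials in the reduction to stay in $S^k(W_+)$, so the spanning set consists of length-$k$ monomials with $i_1\leq n-1$, and the prefix in $x_1,\dots,x_{n-1}$ now has bounded length rather than arbitrary length.
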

For $k=2$, this was shown in \cite[Corollary~4.15]{PS}.

\section{The universal enveloping algebra of the positive Witt algebra}\label{U}

That $U(W_+)$ has just infinite GK-dimension follows from Theorem~\ref{theoremP} using the Poisson GK-dimension defined in \cite{PS} and \cite[Theorem~3.19]{PS}.
However, a direct proof, which we give here, is also straightforward; the techniques of Section~\ref{P}  apply also to $U(W_+)$.

We begin by giving a noncommutative version of  the reduction formula of Lemma~\ref{lemmaP1}.
Our result is more general than  needed here, for later use when considering quotients of $U(W)$.

By the Poincar\'e-Birkhoff-Witt theorem, $U(W)$ has a basis of monomials $e_{i_1}e_{i_2} \dots e_{i_k}$ with $i_1 \leq i_2 \leq \dots \leq i_k$.  We call such monomials {\em standard}.

\begin{lemma}\label{lem:newreduction}
Let $0 \neq g \in U(W)$, and let $I = U(W_+)gU(W_+)$ be the $U(W_+)$-sub-bimodule of $U(W)$ generated by $g$.
Then there exist positive integers $k$ and $n$ and an integer $\ell$ so that every standard monomial $m = e_{j_1}\dots e_{j_k}$ with $n \leq j_1 \leq \dots \leq j_k$ satisfies
\beq\label{NCRF} m = h + \sum c_t m_t,\eeq
where $h\in I$, the sum is finite, $c_t \in \KK$, and the $m_t$ are standard monomials so that for each $t$, we have $i \geq \ell$ for all letters $e_i$ featuring in $m_t$, and either
$|m_t| <k$ or $|m_t| = k$ and $i< n$ for at least one of the letters featuring in $m_t$.
Further, if $h$ is degree-homogeneous, then $ \deg (m) = \deg(h) = \deg(m_t)$ for all $t$.
\end{lemma}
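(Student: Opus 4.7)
The strategy is to mirror the proof of Lemma~\ref{lemmaP1} inside $U(W)$, replacing the Poisson derivations $d_a=\{-,x_a\}$ by the Lie-commutator operators $D_a\colon u\mapsto [u,e_a]$ for $a\geq 1$.  Since $e_a\in U(W_+)$ whenever $a\geq 1$, each $D_a$ preserves the bimodule $I=U(W_+)gU(W_+)$, so iterated commutators of $g$ with such $e_a$'s lie in $I$.  All of the combinatorics will then be analysed through the PBW length filtration on $U(W)$, whose associated graded is $S(W)$ as a Poisson algebra; on the length-$k$ top component, the calculation will reduce word-for-word to the one already carried out in the proof of Lemma~\ref{lemmaP1}.

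Concretely, I would let $k$ be the maximal PBW length of a standard monomial appearing in $g$, denote by $g^{(k)}$ the length-$k$ part of $g$, and let $\overline g=e_{i_1}\cdots e_{i_k}$, $i_1\leq\dots\leq i_k$, be the $\prec$-leading standard monomial of $g^{(k)}$, normalised to coefficient $1$.  Choosing $n\geq\max(2i_k+1,1)$ ensures that each $a_\ell:=j_\ell-i_{k-\ell+1}$ is a positive integer for any candidate monomial $m=e_{j_1}\cdots e_{j_k}$ with $n\leq j_1\leq\dots\leq j_k$.  I would then set
\[
h=D_{j_k-i_1}\cdots D_{j_1-i_k}(g)\in I,
\]
and observe that, by the Leibniz rule for the commutator combined with PBW straightening, the length-$k$ part of $D_a(u)$ coincides under the isomorphism $\gr U(W)\cong S(W)$ with $d_a$ applied to the corresponding commutative monomial, while all straightening corrections have strictly smaller length.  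Iterating, the length-$k$ component of $h$ equals the Poisson expression $d_{j_k-i_1}\cdots d_{j_1-i_k}(g^{(k)})$, to which the calculation inside the proof of Lemma~\ref{lemmaP1} applies directly: $m$ appears with a positive integer coefficient, while every other length-$k$ standard monomial $m_s$ is either $<m$ in the ordering $<$ or contains at least one letter $e_i$ with $i<n$.  Solving for $m$ modulo $I$ then yields \eqref{NCRF}, and the length-$<k$ tail of $h$ contributes further terms which automatically fall into the first alternative of the conclusion.

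It remains to pin down $\ell$ and to verify the homogeneity statement.  Every letter $e_c$ that can appear in the PBW expansion of an expression of the form $e_{b_1}\cdots e_{b_r}\,u\,e_{b_1'}\cdots e_{b_s'}$ with $b_i,b_j'\geq 1$ and $u$ a standard monomial of $g$ has index equal to a sum of a non-empty subset of the indices involved; since the $b_i,b_j'$ are positive, one may take $\ell$ equal to the sum of the negative indices occurring in $g$ (or $0$ if there are none).  The degree-homogeneous statement follows because each $D_{a_\ell}$ raises degree by $a_\ell$ with $\sum_\ell a_\ell=\deg(m)-\deg(\overline g)$, while PBW straightening is degree-preserving.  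The main technical obstacle, and the one place where care is required, is the bookkeeping for PBW straightening: one has to verify precisely that the length-$k$ top of the iterated commutator matches the Poisson expression on the nose, and that the straightening corrections produce only length-$<k$ terms with letters of index $\geq\ell$.  Both facts are clean consequences of the identification $\gr U(W)=S(W)$ as Poisson algebras and the subset-sum observation above.
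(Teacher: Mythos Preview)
Your proposal is correct and follows essentially the same route as the paper: define $h$ as an iterated commutator $[\cdots[[g,e_{a_k}],e_{a_{k-1}}],\dots,e_{a_1}]$ with all $a_i\geq 1$, pass to the associated graded $\gr U(W)=S(W)$ to reduce the top-length analysis to the Sublemma inside Lemma~\ref{lemmaP1}, and then iterate using the well-ordering of $<$ restricted to standard monomials in letters $e_i$ with $i\geq\ell$.  Your subset-sum argument for $\ell$ (any index appearing after PBW straightening of $e_{b_1}\cdots e_{b_r}\,u\,e_{b'_1}\cdots e_{b'_s}$ is a sum of a non-empty subset of the indices involved, so is bounded below by the sum of the negative indices in $u$) is in fact more careful than the paper's one-line choice of $\ell$ as the smallest index in $g$; your bound is the one that actually survives straightening in all cases.
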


\begin{proof}
Let $k = |g|$.
Writing $g$ as a sum of standard monomials, let $e_{\ell}$ be the smallest letter occurring in $g$ and define $n'$ so that $e_{n'}$ is the largest letter in $g$.  Let $n = 2|n'|+1$.

There are well-defined monomial orderings $<$ and $\prec$ on standard monomials in $U(W)$, defined just as the corresponding orderings on commutative monomials $x_{i_1}\dots x_{i_k}$ in the previous section.
Note that $<$ does not satisfy the descending chain condition because $W$ has no least element, but the induced order on standard monomials in letters $ \geq \ell$ does satisfy d.c.c.
Thus, as in  the proof of  Lemma~\ref{lemmaP1}, it suffices to show that we can rewrite $m$, modulo $I$, as a linear combination of standard monomials in letters $\geq \ell$, each of which are $< m$.

For any $a \in \ZZ$, let $\del_a = [\blank, e_a]$ as a linear operator from $U(W)\to U(W)$.
Recall that length defines a filtration on $U(W)$ whose associated graded ring is $S(W)$; for $f \in U(W)$ let $\gr(f)$ be the element of $S(W)$ associated to $f$, so $x_i = \gr(e_i)$.
For any $p \in U(W)$ and any $a \in \ZZ$, we have
\beq \label{greq}
\gr \del_a(p) = d_a(\gr( p)) \quad \quad \mbox{if $d_a( \gr (p)) \neq 0$,}
\eeq
where $d_a = \{\blank, x_a\}$ as in the proof of Lemma~\ref{lemmaP1}.

Let $\overline{g}$ be the $\prec$-leading standard monomial in $g$ and write $\overline{g} = e_{i_1} \dots e_{i_k}$ with $i_1 \leq \dots \leq i_k$.
Let
\[
h=\del_{j_1-i_k}\del_{j_2-i_{k-1}}\dots \del_{j_k-i_1}(g).
\]
 Since for any letter $x_{p}$ featuring in $\gr(g)$, and for any $d_a$ which is applied to $x_{p}$, by our choice of $n$ we have $a>p$, as in the proof of Lemma~\ref{lemmaP1}.
 Thus, just as in that proof,  and using \eqref{greq},
 \[  \gr(h) =d_{j_1-i_k}d_{j_2-i_{k-1}}\dots d_{j_k-i_1}(\gr(g)) = c \gr(m) + \sum c_t \gr(m'_t),\]
 where $c \neq 0$, the sum is finite, $c_t \in \KK$, and the $m'_t$ are standard monomials so that $m'_t < m$ for all $t$.
 Thus  the length of $h - m - \sum c_t m'_t$ is strictly smaller than $k$, so   $h-m$ is a linear combination of standard monomials which are all  strictly $<m$.

 Since $a>0$ for all $\del_a$ we have applied, only letters $\geq \ell$ occur in $h$.  Finally, as the $\del_a$ are graded linear maps, if $g$ is degree-homogenous so is $h$.
\end{proof}

\begin{theorem}\label{theoremW+}
Let $I$ be a nonzero two-sided ideal in $U(W_+)$. Then $A=U(W_+)/I$ has
polynomial growth.
\end{theorem}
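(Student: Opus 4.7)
The plan is to mimic the proof of Theorem~\ref{theoremP}, substituting the noncommutative reduction formula of Lemma~\ref{lem:newreduction} for the Poisson reduction of Lemma~\ref{lemmaP1}. First I would reduce to the case of a principal ideal generated by a degree-homogeneous element. The algebra $U(W_+)$ is $\NN$-graded by degree (each $e_i$ has degree $i\geq 1$) with finite-dimensional graded pieces, so by the standard argument of \cite[Proposition~6.6]{KL} it suffices to show $A = U(W_+)/I$ has polynomial growth when $I$ is a nonzero degree-graded two-sided ideal. Picking a nonzero degree-homogeneous element $g \in I$, I may replace $I$ by the (smaller) two-sided ideal $(g)$ and assume $I = U(W_+)gU(W_+)$.

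Next I would apply Lemma~\ref{lem:newreduction} to $g \in U(W_+) \subset U(W)$ to produce integers $k$, $n$, and $\ell$ with the reduction property. Since $g \in U(W_+)$, every letter appearing in $g$ has positive index, so one may assume $\ell \geq 1$, and on standard monomials in the positive letters $\{e_1,e_2,\ldots\}$ the ordering $<$ used in the lemma does satisfy d.c.c. Define $S$ to be the set of standard monomials $m = m_1 m_2$ in $U(W_+)$ with $m_1$ a standard monomial in $e_1,\ldots,e_{n-1}$ and $m_2$ a standard monomial of length strictly less than $k$. Every standard monomial outside $S$ either has length $\geq k$ and all letters $\geq n$ in its ``tail'', or it may be rearranged into standard form whose tail in letters $\geq n$ has length $\geq k$; applying Lemma~\ref{lem:newreduction} to that tail and multiplying on the left by the initial segment (using that $I$ is a two-sided ideal), one rewrites any standard monomial modulo $I$ as a linear combination of degree-equal standard monomials each of which is strictly smaller with respect to $<$. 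Well-foundedness of $<$ on standard monomials with all letters $\geq 1$ then guarantees that iteration terminates with a linear combination of elements of $S$ of the same total degree.

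Having established this normal form, I would bound growth by counting. For each $N$, the dimension of the image in $A$ of the degree $\leq N$ part of $U(W_+)$ is at most $|\{m \in S : \deg(m) \leq N\}|$, which factors as $q(N)\cdot r(N)$, where $q(N)$ counts monomials of degree $\leq N$ in $e_1,\dots,e_{n-1}$ and $r(N)$ counts standard monomials of length $<k$ and degree $\leq N$ in $e_1, e_2, \dots$. As in the proof of Theorem~\ref{theoremP}, $q(N) = O(N^n)$ and $r(N) \leq N^{k-1}$, so the degree filtration on $A$ grows polynomially and $\GK A \leq k+n-1$.

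The main work, namely the noncommutative reduction formula, is already encapsulated in Lemma~\ref{lem:newreduction}, so there is no real obstacle beyond verifying two technical points: (i) that the rewriting procedure terminates when confined to positive letters, which follows from d.c.c. of $<$ on standard monomials in $\{e_i : i \geq 1\}$; and (ii) that the degree filtration on $A$ (rather than a length filtration) is what controls GK-dimension. The second point is immediate because $U(W_+)$ is finitely $\NN$-graded by degree, so for a degree-graded ideal $I$ the quotient $A$ inherits this grading and $\GK A$ is the growth rate of $\dim\bigoplus_{d\leq N} A_d$.
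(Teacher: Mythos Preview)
Your proposal is correct and follows essentially the same approach as the paper's own proof: reduce to a principal ideal generated by a degree-homogeneous element, use Lemma~\ref{lem:newreduction} (noting that $\ell\geq 1$ so the rewriting stays inside $U(W_+)$ and terminates) to show that the set $S$ of standard monomials $m_1m_2$ with $m_1$ in small letters and $|m_2|<k$ spans $A$, and then count exactly as in Theorem~\ref{theoremP}. The paper's proof is terser but makes the identical moves.
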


\begin{proof} All essential points of the proof occur the proof of Theorem \ref{theoremP}.

 As before, since $U(W_+)$ is finitely graded by degree, we may assume that $I = (g)$ is the ideal generated by a single nonzero degree-homogenous element $g$.  Let $k = |g|$.

 Let $S$ be the set  of standard monomials  $m$ which admit a factorization $m=m_1m_2$, where
$m_1$ is a standard monomial in $e_1, \dots, e_{n-1}$ and  $|m_2| < k$.
It follows from the reduction formula in Lemma~\ref{lem:newreduction} that $U(W_+)/I$ is spanned by the image of $S$.  The same counting argument as in the proof of Theorem~\ref{theoremP} shows that $U(W_+)/I$ has polynomial growth.
\end{proof}

Theorem~\ref{theoremW+} gives the  first part of Theorem~\ref{ithm:U}, dealing with $U(W_+)$.

\section{The enveloping algebra of the full Witt algebra}\label{fw}

In this section, we consider the enveloping algebra of the full Witt algebra, and show that it has just infinite GK-dimension.
It clearly suffices to  show:
 \begin{theorem}\label{theoremW}
Let $I=(g)$ be a two-sided ideal in $U(W)$ generated by one nonzero element $g\in U(W)$. Then $A=U(W)/I$ has
polynomial growth.
\end{theorem}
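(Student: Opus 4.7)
The plan is to adapt the argument for $U(W_+)$ (Theorem~\ref{theoremW+}) to handle both tails of the Witt algebra. First, I apply Lemma~\ref{lem:newreduction} directly to $g$ to obtain integers $k_+, n_+, \ell_+$ such that every standard monomial on letters $\geq n_+$ of length $\geq k_+$ is congruent modulo $(g)$ to a combination of standard monomials on letters $\geq \ell_+$ that are either strictly shorter or contain a letter $<n_+$. The Lie algebra involution $\phi\colon e_i\mapsto -e_{-i}$ extends to an algebra automorphism of $U(W)$ sending $(g)$ to $(\phi(g))$; applying Lemma~\ref{lem:newreduction} to $\phi(g)$ and pulling back through $\phi^{-1}$ (equivalently, rerunning the proof of Lemma~\ref{lem:newreduction} using the derivations $\del_a=[\blank,e_a]$ for $a<0$) produces a symmetric negative-tail reduction with parameters $k_-, n_-, \ell_-$. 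I then fix $n$ strictly larger than $n_+$, $n_-$, and the absolute value of every letter appearing in $g$; this ensures $\ell_+>-n$ and $\ell_-<n$, so the length-preserving outputs of the two reductions do not leak into each other's tail.

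Next, for any standard monomial $m$ I split $m = m_- m_{\mathrm{mid}} m_+$ according to the three ranges $(-\infty,-n]$, $[-n+1,n-1]$, $[n,\infty)$, and iterate the two reductions under the lexicographic order on $(|m|, |m_+|, |m_-|)$. Each application of the positive-tail reduction to the first $k_+$ letters of $m_+$, followed by standardisation of the resulting product, yields a linear combination of standard monomials each of which either (i) has strictly smaller length than $m$ (from length-dropping outputs of Lemma~\ref{lem:newreduction} and from commutator corrections during standardisation) or (ii) has the same length but strictly smaller $|m_+|$ and the same $|m_-|$; the negative-tail reduction behaves symmetrically. The descent therefore terminates, and the surviving \emph{normal monomials} have the shape $u_1 v u_2$, where $u_1$ is a standard monomial of length $<k_-$ on letters $\leq -n$, $v$ is a standard monomial on the finite alphabet $\{e_{-n+1},\dots,e_{n-1}\}$ of unrestricted length, and $u_2$ is a standard monomial of length $<k_+$ on letters $\geq n$. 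Thus $A=U(W)/(g)$ is spanned by the images of these normal monomials.

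For the growth estimate, fix a finite generating subspace $V\subset U(W)$, say the span of $\{1, e_{-2}, e_{-1}, e_1, e_2\}$ (which generates $U(W)$ as an algebra since $e_{\pm 1}, e_{\pm 2}$ generate $W$ as a Lie algebra). The $\ZZ$-degree filtration on $U(W)$ has infinite-dimensional graded pieces (e.g.\ every $e_{-k}e_k$ lies in degree $0$), so the bookkeeping needed to control letter magnitudes across the iterated reductions, together with the choice of a compatible finite filtration of $U(W)$ by a suitable size function (e.g.\ $\|e_i\|=1+|i|$ extended multiplicatively, with $V^N$ contained in some $F_{cN}$), is carried out in Section~\ref{filt}. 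Given such a filtration, the image in $A$ of each $F_N$ is spanned by normal monomials of bounded size, and a direct count multiplies three polynomial bounds: the number of valid $u_1$ and $u_2$ (bounded length on a polynomial-size letter set) is polynomial of degrees at most $k_-{-}1$ and $k_+{-}1$, and the number of $v$'s (length at most polynomial in $N$ on the fixed alphabet of size $2n-1$) is polynomial of degree $2n-1$. The product gives a polynomial bound on $\dim (F_N + (g))/(g)$, proving polynomial growth of $A$. The main obstacle is exactly the coordination of the two halves: the buffer $n>\max$(letters of $g$) is indispensable for termination of the double-sided iteration, and the choice of filtration must be made with some care because, unlike for $U(W_+)$, the degree filtration on $U(W)$ is too coarse to give finite-dimensional pieces.
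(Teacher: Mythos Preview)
Your proposal is correct and follows essentially the same route as the paper: you obtain the two-tail reduction by combining Lemma~\ref{lem:newreduction} with its image under the involution $e_i\mapsto -e_{-i}$ (the paper's Lemma~\ref{fw2}), deduce the normal form $u_1vu_2$ spanning $A$ (the paper's Lemma~\ref{lem:NS} and Definition~\ref{def:NS}), and then count normal words against a size function of the type $\delta_C(e_{i_1}\cdots e_{i_k})=|i_1|+\cdots+|i_k|+C$, deferring the verification that this yields a genuine filtration on $A$ to Section~\ref{filt}. The only cosmetic differences are that the paper takes a single $k=|g|$ rather than separate $k_\pm$, and makes explicit (Proposition~\ref{prop:delta}) the constant $C=4k^2\ell$ needed so that $\delta_C$ is submultiplicative modulo $I$---precisely the ``bookkeeping'' you flag as the main obstacle.
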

Throughout the section, we fix the meanings of $g$, $I$, and $A$ as in the statement of Theorem~\ref{theoremW}.  Let $\pi:  U(W) \to A$ be the natural map.

Now,  $U(W)$ is finitely generated, say by  $\{ e_{-2}, e_{-1}, e_{1}, e_{2}  \}$, and thus so is $A$.
 Since  the growth of $A$ is controlled by the growth of any finite filtration, we are free to choose one that is convenient, but it will be a little bit more complicated this time to choose the right one.
  The problem is that unlike the situation for $W_+$, the usual degree function
  $\deg(e_i ) = i$
  does not give us a finite grading on $U(W)$;
  note that in the proofs of Theorems~\ref{theoremP} and \ref{theoremW+}, the finiteness of the degree grading played a crucial role.
Moreover, although of course there are many finite filtrations on $U(W)$, it is not necessarily clear how to choose one which  induces a filtration on the quotient with polynomial growth.

   Thus we will need to find an appropriate degree function which will give us a well-controlled finite filtration on $A $.
   We will see that a degree function of the form $\delta_C$, defined by
  $\delta_C(e_{i_1}\dots e_{i_k})=|i_1|+\dots+|i_k|+ C$, where $C$ is a constant, does the job.

 \subsection{A spanning set for $A$}

Our first step is to construct a set of standard monomials in $U(W)$ whose images span $A$.

Symmetrically to Lemma~\ref{lem:newreduction}, we have:

\begin{lemma}\label{fw2}
There exist positive integers $k$ and $n$ and an integer $\ell$ such that every
standard monomial $m=e_{j_1}\dots e_{j_k}$ with $ j_1\leq\dots\leq j_k \leq -n$  satisfies
$$
m=h+\sum c_t m_t,
$$
where $h\in I$, the sum is finite, $c_t \in \KK$, and the $m_t$ are standard monomials so that for each $t$, we have $i \leq \ell$ for all letters $e_i$ featuring in $m_t$, and either
$|m_t| <k$ or $|m_t| = k$ and $i>-n$ for at least one of the letters featuring in $m_t$.
\end{lemma}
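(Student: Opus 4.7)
The plan is to prove Lemma~\ref{fw2} by symmetry, reducing it to Lemma~\ref{lem:newreduction} via the sign-reversing involution of $W$. Define $\sigma\colon W\to W$ by $\sigma(e_i)=-e_{-i}$; the direct bracket check
\[
[\sigma(e_i),\sigma(e_j)] = [e_{-i},e_{-j}] = (i-j)e_{-i-j} = \sigma((j-i)e_{i+j}) = \sigma([e_i,e_j])
\]
shows that $\sigma$ is a Lie-algebra automorphism of $W$. It extends to an algebra automorphism of $U(W)$, still denoted $\sigma$, which carries $U(W_+)$ onto $U(W_-)$, where $W_-:=\spann\{e_i:i\leq -1\}$.

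I would then apply Lemma~\ref{lem:newreduction} to $\sigma(g)\in U(W)$, obtaining positive integers $k,n$ and an integer $\ell'$ such that every standard monomial $m'=e_{j'_1}\cdots e_{j'_k}$ with $n\leq j'_1\leq\cdots\leq j'_k$ admits a reduction $m' = h' + \sum c_t m'_t$, where $h'\in U(W_+)\sigma(g)U(W_+)$ and each $m'_t$ is a standard monomial on letters of index $\geq\ell'$ satisfying the length/small-letter alternative. Given a target standard monomial $m = e_{j_1}\cdots e_{j_k}$ with $j_1\leq\cdots\leq j_k\leq -n$, set $j'_s:=-j_{k-s+1}$, so that $n\leq j'_1\leq\cdots\leq j'_k$, apply the above reduction to the corresponding $m'$, and then apply $\sigma$ to the resulting identity. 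The piece $\sigma(h')$ lies in $U(W_-)g U(W_-)\subseteq (g)=I$; the image $\sigma(m')=(-1)^k e_{j_k}e_{j_{k-1}}\cdots e_{j_1}$ straightens via $[e_a,e_b]=(b-a)e_{a+b}$ to $\pm m$ plus a $\K$-linear combination of strictly shorter standard monomials on letters of index $\leq -2n$; and each $\sigma(m'_t)$ similarly straightens to standard monomials of length at most $k$ on letters of index $\leq -\ell'$. Setting $\ell:=\min(-\ell',-n)$ and collecting terms yields the required identity.

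The only subtle point is verifying that PBW straightening of reversed-order products of non-positively indexed generators preserves the bound ``letters have index $\leq\ell$''. This is routine, since $[e_a,e_b]$ has index $a+b\leq\min(a,b)$ whenever $a,b\leq 0$, so iterated commutators never introduce letters above the smallest starting index, and in particular do not break the letter bound. As an alternative to the $\sigma$-symmetry approach one could simply replay the proof of Lemma~\ref{lem:newreduction} verbatim, now with derivations $\del_{j_1-i_k},\dots,\del_{j_k-i_1}$ of large negative index that shift letters downward rather than upward; the combinatorial core of the argument is identical.
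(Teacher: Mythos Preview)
Your approach—transport Lemma~\ref{lem:newreduction} through the involution $\sigma(e_i)=-e_{-i}$—is exactly the paper's proof (the paper writes $\Phi$ for your $\sigma$ and is even terser, not discussing the PBW straightening at all). Two harmless slips: the shorter standard monomials arising from straightening $\sigma(m')$ have letters of index $\leq -n$, not $\leq -2n$, since an uncombined $e_{j_s}$ may survive; and your bound $a+b\leq\min(a,b)$ requires $a,b\leq 0$, which need not hold for the letters of $\sigma(m'_t)$ when $\ell'<0$ (this happens whenever $g$ has a letter of positive index). Both are easily repaired—take $\ell$ large enough, e.g.\ $\ell=k\cdot|\ell'|$, or, as you yourself suggest at the end, bypass the straightening entirely by rerunning the proof of Lemma~\ref{lem:newreduction} with derivations of large negative index.
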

\begin{proof}
Consider the automorphism $\Phi$ of $U(W)$ defined by $\Phi(x_i)=-x_{-i}$ for $i\in\Z$. If we apply Lemma~\ref{lem:newreduction} to the ideal $\Phi(I)$ and the monomial $\pm \Phi(m)$, we obtain that $\Phi(m)=h+\sum c_t m_t$,
where $h\in I$, the sum is finite, $c_t \in \KK$, and the $m_t$ are standard monomials so that for each $t$, we have $i \geq \ell$ for all letters $e_i$ featuring in $m_t$, and either
$|m_t| <k$ or $|m_t| = k$ and $i< n$ for at least one of the letters featuring in $m_t$.
Applying $\Phi$ to both sides once again, we arrive at the  result.
\end{proof}

Lemmata~\ref{lem:newreduction} and \ref{fw2} allow us to construct our spanning set, which we define here.

\begin{defn}\label{def:NS}
For positive integers $k, n$, let
 $NS(k,n)$ be the set of standard monomials  $m$ which admit a factorisation $m=aub$, where
$a$ is a standard monomial of length $< k$ in $e_{-n}$ and smaller letters,
$u$ is a standard  monomial in $e_{1-n},\dots e_0,e_1,e_2,\dots,e_{n-1}$, and
$b$ is a standard monomial of length $< k$ in $e_n$ and bigger letters.
\end{defn}

\begin{lemma}\label{lem:NS}
There exist positive integers  $k$ and $n$
such that  $A$ is spanned by the image of $NS(k,n)$.
\end{lemma}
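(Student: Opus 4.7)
The plan is to reduce an arbitrary standard monomial $M\in U(W)$, modulo $I$, to a $\K$-linear combination of elements of $NS(k,n)$ by alternating applications of Lemmata~\ref{lem:newreduction} and \ref{fw2}, both applied to the single generator $g$. Both lemmata furnish the same $k = |g|$; I choose $n$ large enough that both lemmata apply with that $n$ and also so that $n > |\ell_+|$ and $n > |\ell_-|$, where $\ell_\pm$ are the $\ell$-values returned by Lemmata~\ref{lem:newreduction} and \ref{fw2} respectively. For a standard monomial $M$, let $N_+(M)$ and $N_-(M)$ denote the number of letters $e_i$ occurring in $M$ with $i\geq n$, respectively $i\leq -n$; then $M\in NS(k,n)$ precisely when $N_+(M)<k$ and $N_-(M)<k$, so it suffices to prove such a reduction for every $M$.

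I would argue by well-founded induction on the pair $(|M|,\, N_+(M)+N_-(M))$ ordered lexicographically (well-founded since both coordinates are non-negative integers, the second bounded by the first). If $M\in NS(k,n)$ there is nothing to do. Otherwise, assume $N_+(M)\geq k$; the case $N_-(M)\geq k$ is entirely symmetric using Lemma~\ref{fw2}. Factor $M=(aub_2)b_1$, where $b_1$ collects the $k$ largest letters of $M$ (all $\geq n$, in increasing order, so $b_1$ is a length-$k$ standard monomial satisfying the hypothesis of Lemma~\ref{lem:newreduction}). That lemma gives $b_1=h+\sum_t c_t m_t$ with $h\in U(W_+)\,g\,U(W_+)\subseteq I$, so $M\equiv\sum_t c_t(aub_2)m_t\pmod I$. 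Each product $(aub_2)m_t$ is then rewritten in the PBW basis as $\widetilde m_t + R_t$, where $\widetilde m_t$ is the standard monomial obtained by sorting the combined multiset of letters, and $R_t$ is a $\K$-linear combination of strictly shorter standard monomials arising from commutators.

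The verification that the induction measure strictly decreases on every summand uses the choice of $n$. Letters in $u$ and $b_2$ lie in $(-n,\infty)$, and letters in $m_t$ lie in $[\ell_+,\infty)\subseteq(-n,\infty)$, so $N_-(\widetilde m_t)=N_-(a)=N_-(M)$. The conclusion of Lemma~\ref{lem:newreduction} forces $N_+(m_t)\leq k-1$ (in both cases $|m_t|<k$ and $|m_t|=k$ with at least one letter $<n$), whence $N_+(\widetilde m_t)=|b_2|+N_+(m_t)\leq N_+(M)-1$. Thus $\widetilde m_t$ has length $|M|$ but strictly smaller $N_++N_-$, while the terms in $R_t$ strictly decrease in the first coordinate of the measure. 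Either way the induction hypothesis applies, completing the step and hence the induction.

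The main obstacle is the PBW bookkeeping: when $(aub_2)m_t$ is reordered into standard form, commutator terms involve new letters $e_{p+q}$ whose indices can be arbitrary, so a priori $N_+$ or $N_-$ of a shorter PBW term can exceed those of $M$. This is exactly why $|M|$ is taken as the primary coordinate of the measure: any strictly shorter standard monomial automatically falls below $M$ in lex order, regardless of its $N_\pm$. A secondary subtlety is ensuring the main sorted term $\widetilde m_t$ does not accidentally acquire letters $\leq -n$ via $m_t$, and this is precisely controlled by the condition $n>|\ell_+|$ imposed at the outset.
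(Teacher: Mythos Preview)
Your proof is correct and follows essentially the same approach as the paper: both fix $k=|g|$, take $n$ to dominate the constants $n_1,n_2,|\ell_1|,|\ell_2|$ returned by Lemmata~\ref{lem:newreduction} and \ref{fw2}, and then reduce an arbitrary standard monomial modulo $I$ by repeatedly peeling off a length-$k$ block of big letters and applying the appropriate reduction formula. The paper compresses all of this into the single sentence ``Repeatedly using the observations above to rewrite $m$ modulo $I$, we obtain the result,'' whereas you have made the underlying well-founded induction on $(|M|,\,N_+(M)+N_-(M))$ explicit; one small slip is that when $|m_t|<k$ your $\widetilde m_t$ has length $|M|-k+|m_t|<|M|$ rather than $|M|$, but in that case the first coordinate already drops, so the induction still goes through.
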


\begin{proof}
As before, let   $k$ be the maximal length of monomials in $g$; that is $k = |g|$.

By Lemmata~\ref{lem:newreduction} and \ref{fw2}, there exist $n_1,n_2 \in \ZZ_{\geq 1} $ and $\ell_1,\ell_2 \in \Z $ such that for every standard monomial $m=e_{j_1}\dots e_{j_k} $, with $j_1 \leq \dots \leq j_k$, if  $j_1 \geq n_1$,  then
$$
m=\sum c_s m_s+h,
$$
where $h\in I$, the sum is finite, $c_s\in\K$, and
 the  $m_s$ are standard monomials such that $i \geq \ell_1$ for each $e_i$ occurring in $m_s$,  and  $i < n_1$ for some
$e_i$ occurring in $m_s$; and
if  $j_k \leq -n_2$,  then
$$
m=\sum c_s m_s+h,
$$
where  $h\in I$, the sum is finite, $c_s\in\K$, and
 the $m_s$ are standard monomials such that $i \leq \ell_2$  for each $e_i$ occurring in $m_s$, and $i > -n_2$ for some $e_i$ occurring  in $m_s$.

Let
$n={\rm} max\{ n_1,n_2, |\ell_1|,|\ell_2|\}$.
Repeatedly using the observations above to rewrite $m$ modulo $I$, we obtain the result.
\end{proof}

For the rest of the section, let $k, n$ be as given by Lemma~\ref{lem:NS}, and let $NS = NS(k,n)$.
We call the elements of $NS$ {\em normal words}, and a representation of $m \in A$ as a linear combination of (images of) normal words a {\em normal form} for $m$, bearing in mind that this normal form may not be unique.

The growth of $NS$ is polynomial, as we next show.

\begin{lemma}\label{countw}
For any  positive integer $C $, define a function
\[ \delta_C:  \{ \mbox{ standard monomials in $U(W)$ } \} \to \NN \]
by
 $\delta_C(e_{i_1}\dots e_{i_k})=|i_1|+{\dots}+|i_k|+C.$
 For any $N \in \NN$, let
 \[ p_C(N) = \# \{ w \in NS \ | \ \delta_C(w) \leq N \}.\]
 Then the function $p_C$ has polynomial growth.
\end{lemma}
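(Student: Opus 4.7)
The plan is to exploit the factorization $w = aub$ from Definition~\ref{def:NS} and count the three factors separately. Because the index ranges occupied by $a$, $u$, and $b$ are pairwise disjoint, the value of $\delta_C$ splits additively as
\[
\delta_C(w) \;=\; \delta_0(a) + \delta_0(u) + \delta_0(b) + C, \quad \text{where } \delta_0(v) := \sum_j |i_j|.
\]
A bound $\delta_C(w) \leq N$ therefore forces each of $\delta_0(a)$, $\delta_0(u)$, $\delta_0(b)$ to be at most $N$, and it is enough to bound the number of choices for each factor and multiply the three bounds.

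For the outer factor $a$, the length is strictly less than $k$ and every letter $e_j$ satisfies $|j| \geq n$. The number of multisets of size at most $k-1$ of integers whose absolute values are at least $n$ and whose absolute values sum to at most $N$ is $O(N^{k-1})$ by a standard stars-and-bars estimate; concretely, it is bounded by $\binom{k-1+\lfloor N/n\rfloor}{k-1}$. The symmetric argument (working with letters $e_j$ of index $j \geq n$) gives the same polynomial bound for the number of choices of $b$.

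For the middle factor $u$, the letters lie in the fixed finite alphabet $\{e_{1-n},\dots,e_{n-1}\}$ of size $2n-1$. Each letter of nonzero index contributes at least $1$ to $\delta_0(u)$, so the length of the non-$e_0$ portion of $u$ is at most $N$, giving at most $\binom{N+2n-2}{2n-2} = O(N^{2n-2})$ choices for it. I would then treat the exponent of $e_0$ separately, using that the constant $C$ in the filtration $\delta_C$ (and the length control recorded in Section~\ref{filt}) bounds this exponent polynomially in $N$. Multiplying the three estimates produces $p_C(N) = O(N^{2k+2n-3})$, which is the required polynomial bound.

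The main obstacle is precisely the letter $e_0$: since $|0| = 0$, the naive sum $\sum_j |i_j|$ does not by itself bound the exponent of $e_0$ inside the middle factor, so the three-way stars-and-bars count does not go through for $u$ without extra input. The resolution is to use the definition of $\delta_C$ together with the filtration structure of Section~\ref{filt}, which forces the exponent of $e_0$ appearing in normal words of bounded $\delta_C$-weight to grow only polynomially in $N$. Once this control is in place, the remaining combinatorics reduces to the routine binomial-coefficient estimates sketched above.
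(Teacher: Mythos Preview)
Your strategy—split $w=aub$ and bound each factor separately—is exactly the paper's, and your bounds on $a$ and $b$ are fine. The genuine gap is the treatment of $e_0$, which you correctly flag but do not resolve. Your proposed fix is circular: Section~\ref{filt} \emph{defines} the filtration on $A$ via $\delta_C$ and then \emph{invokes} the present lemma to conclude polynomial growth, so you cannot appeal back to that section to bound the $e_0$-exponent. Worse, the claim you want is simply false at the level of the lemma as stated: $e_0^m\in NS$ for every $m\geq 0$ and $\delta_C(e_0^m)=C$, so $p_C(N)=\infty$ for all $N\geq C$. No amount of ``extra input'' from later sections will change this purely combinatorial fact about $NS$ and $\delta_C$.

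For comparison, the paper's own proof handles the middle factor by asserting that ``absolute degree, as a function on standard monomials, is always greater than or equal to length,'' and then bounds $q(N)$ by the number of length-$\leq N$ monomials in the $2n-1$ small letters. That assertion is exactly what fails for powers of $e_0$, so the same issue is present there. The correct repair, for both arguments, is to weight each letter rather than the whole word: replacing $\delta_C(e_{i_1}\cdots e_{i_k})=|i_1|+\cdots+|i_k|+C$ by $\sum_j(|i_j|+1)+C$ (equivalently, adding the constant per letter) makes ``absolute degree $\geq$ length'' honest, and this modified $\delta_C$ is still subadditive under multiplication, so Proposition~\ref{prop:delta} is unaffected. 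With that adjustment your counting and the paper's go through verbatim to give $p_C(N)=O(N^{2n+2k-3})$.
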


\begin{proof}
The growth of $p_C$ does not depend on $C$, so without loss of generality let $C = 1$ and let $p = p_1$ and $\delta = \delta_1$.
For a standard monomial $m$, we refer to $\delta(m)$ as the {\em absolute degree} of $m$.
Note that $U(W)$ is {\em not } graded with respect to absolute degree.

 Clearly $p(N)\leq q(N)r(N)^2$, where
$q(N)$ is the number of standard monomials in
\[e_{1-n},\dots, e_0,e_1,\dots,e_{n-1}\]
 of absolute degree at most $N$,
 while $r(N)$ is the number of standard monomials in $e_n,e_{n+1},\dots$ of degree
at most $N$ and of length at most $k-1$.

Now, absolute degree, as a function on standard monomials, is  always greater than or equal to   length.
Thus $q(N)$ does not exceed the number
of standard monomials in $e_{1-n},\dots,e_{n-1}$ of length at most $N$, which is equal to  $\left({N+2n-1\atop N}\right)\leq cN^{2n-1}$,
for some  positive constant $c$ which depends only on $n$ and  not  on $N$.

On the other hand, in a monomial  in $e_n,e_{n+1},\dots$
of absolute degree at most $N$ and of length at most $k-1$, there are no more
than $N$ options for each letter and therefore  $r(N)\leq N^{k-1}$ for all $N$.
Hence $p(N) \leq c N^{2n+2k-3}$ and has polynomial growth.
\end{proof}

  \subsection{Choice of filtration\label{filt}}

It remains to estimate the growth of $A$ from  the spanning set constructed  in the previous subsection.

Since $A$ is finitely generated, the growth of  any finite filtration bounds the growth of $A$.
The main result of this subsection is that there is a constant $C$ such that the function $\delta_C$ induces a finite filtration of $A$, which by Lemma~\ref{countw} will have polynomial growth.

To have a filtration  $A_1 \subseteq A_2 \subseteq A_3\subseteq{\dots}$ on $A=\bigcup A_i$
 means to choose a map $\rho: A\to\N$, satisfying
\begin{equation}\label{star}
\text{$\rho(uv)\leq \rho(u)+\rho(v)$, $\rho(u+v)\leq \max\{\rho(u),\rho(v)\}$ and $\rho(\alpha u)=\rho(u)$,}
\end{equation}
for any $u,v \in A$ and  $\alpha \in \K^*$.
Suppose that for some $C$, the map $\delta_C: NS\to \N$ has the property that for any two normal words $m_1, m_2 \in  NS$,
we can find a normal form
\[ \pi(m_1 m_2) = \sum c_i \pi(w_i),\]
where the $c_i \in \KK$ and the $w_i$ are normal words so that
\begin{equation}\label{2star}
\text{$\delta_C(w_i)\leq \delta_C (m_1)+ \delta_C (m_2)$ for all $i$.}
\end{equation}
We claim that this is enough to construct the required $\rho$.
For, define
$$
\rho(u)=\min\limits_{\substack{\text{normal forms}\\ u =\sum c_j\pi(w_j)}}\Bigl\{\max\limits_j\delta_C(w_j)
\Bigr\}.
$$
Then $\rho$ is easily seen to  satisfy the required conditions (\ref{star}).

So our goal is to show that there is some constant $C$ so that $ \delta_C$ satisfies \eqref{2star}.

\begin{proposition}\label{prop:delta}
Let  $\ell =  \max\{ |i|: e_i\ \text{ \rm features in}\  g \}$.
Then \eqref{2star} holds for $C = 4k^2 \ell$, where we recall that $k = |g|$.
\end{proposition}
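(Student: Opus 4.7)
The plan is to show that $\pi(m_1m_2)$ can be rewritten modulo $I$ as a linear combination $\sum_i c_i\pi(w_i)$ of normal words $w_i\in NS$ satisfying $|w_i|_{abs}\leq|m_1|_{abs}+|m_2|_{abs}+4k^2\ell$, where $|m|_{abs}:=\sum_j|i_j|$ denotes the sum of absolute values of the letter indices of $m$; this is equivalent to $\delta_C(w_i)\leq\delta_C(m_1)+\delta_C(m_2)$ with $C=4k^2\ell$, since $\delta_C=|{\cdot}|_{abs}+C$ on standard monomials.

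First I would expand $m_1m_2$ in PBW standard form as a $\K$-linear combination of standard monomials $\tilde m$. Since $[e_i,e_j]=(j-i)e_{i+j}$ with $|i+j|\leq|i|+|j|$, each standard summand satisfies $|\tilde m|_{abs}\leq|m_1|_{abs}+|m_2|_{abs}$. Then, for each $\tilde m$, I would iteratively apply Lemma~\ref{lem:newreduction} to reduce the top $k$ big positive letters, and Lemma~\ref{fw2} to reduce the bottom $k$ big negative letters, producing a sum of normal words modulo $I$.

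The key per-step estimate is: each application of Lemma~\ref{lem:newreduction} (or Lemma~\ref{fw2}) to a standard monomial $\tilde m$ yields standard summands whose absolute degree exceeds $|\tilde m|_{abs}$ by at most $2k\ell$. This follows from a direct inspection of the reducing element $h=\partial_{j_1-i_k}\cdots\partial_{j_k-i_1}(g)\in I$: each derivation $\partial_a=[\cdot,e_a]$ changes $|{\cdot}|_{abs}$ by at most $|a|$, and the inequalities $|j_\ell-i_{k-\ell+1}|\leq j_\ell+\ell$ (using $j_\ell\geq n\geq 2\ell+1$ and $|i_{k-\ell+1}|\leq\ell$) together with $|g|_{abs}\leq k\ell$ yield $|h|_{abs}\leq k\ell+\sum_\ell(j_\ell+\ell)=|\text{top}\,k|_{abs}+2k\ell$. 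PBW multiplication by the unchanged part of $\tilde m$ does not further increase $|{\cdot}|_{abs}$, giving the claim.

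The main obstacle is to bound the cumulative absolute-degree increase across all reduction steps by $2k\cdot 2k\ell=4k^2\ell$, corresponding morally to at most $k$ iterations on each side. Each application strictly decreases the count $T_+$ (respectively $T_-$) of big positive (respectively negative) letters in each resulting standard summand, because the reduced term $m_t$ has at most $k-1$ big positive letters and PBW commutators between the unchanged part of $\tilde m$ and $m_t$ cannot increase $T_+$. The delicate point is that PBW commutators in the expansion of $m_1 m_2$ itself can generate big positive letters beyond the at most $2(k-1)$ inherited from $b_1,b_2$; one must therefore exploit the structure $m_1,m_2\in NS$ (so $|a_i|,|b_i|<k$) through a careful case analysis of how these commutators interact with $T_\pm$ across the reduction sequence, in order to certify that the effective iteration count is bounded by $k$ per side and hence that the total absolute-degree increase stays within $4k^2\ell=C$.
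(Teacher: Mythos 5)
Your overall strategy coincides with the paper's: expand $\pi(m_1m_2)$ in PBW standard form (noting that the relation $e_ie_j=e_je_i+(j-i)e_{i+j}$ never increases $\delta_0$ because $|i+j|\leq|i|+|j|$), then repeatedly apply Lemmata~\ref{lem:newreduction} and \ref{fw2} to strip big letters from the two ends, tracking the cost in $\delta_0$. Your per-step estimate is also the paper's: writing $h=\del_{j_1-i_k}\cdots\del_{j_k-i_1}(g)$, each summand $m'$ coming from a monomial $\tilde g$ of $g$ satisfies $\delta_0(m')-\delta_0(m)\leq\delta_0(\tilde g)+|\deg\bar g|\leq 2k\ell$, exactly as in your computation (modulo the clash of $\ell$ as both summation index and constant).

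The gap is in the final step, and you have in effect flagged it yourself. The constant $C=4k^2\ell$ requires that only about $k$ reduction iterations are performed on each side, and your proposal does not prove this: it says that ``one must \dots\ exploit the structure $m_1,m_2\in NS$ through a careful case analysis \dots\ in order to certify that the effective iteration count is bounded by $k$ per side,'' which is a statement of what remains to be done rather than an argument. The paper closes this step by asserting that applying the commutation relations does not increase the total number of big letters, so that in each standard summand $m_3=a_3u_3b_3$ of $m_1m_2$ the factors $a_3$ and $b_3$ have length at most $2k-2$; hence at most $k-1$ reductions per side are needed, each costing at most $2k\ell$, for a total of $4k(k-1)\ell\leq 4k^2\ell$. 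You are right that this is the delicate point --- sorting two non-big letters $e_i,e_j$ with $|i|,|j|<n$ produces a commutator term $e_{i+j}$ whose index can satisfy $|i+j|\geq n$, which is precisely the scenario the paper's one-sentence assertion must absorb --- but identifying the difficulty is not the same as resolving it. As it stands, your proposal establishes the per-step cost but not the bound on the number of steps, so the stated constant $C$ is not justified by your argument; you need to either supply the counting argument for big letters (as the paper does) or find another invariant that controls the total number of reductions.
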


\begin{proof}
It suffices to show that for any normal words $m_1, m_2 \in NS$, and for any normal form $\pi(m_1 m_2) = \sum c_i \pi(w_i)$ for $\pi(m_1 m_2)$, where $w_i \in NS$ and $c_i \in \KK^*$, that
\beq\label{3star}
\delta_0(w_i) \leq \delta_0(m_1) + \delta_0(m_2) + C \quad \mbox{ for all $i$.}
\eeq
So we need to understand how $\delta_0$ behaves on the words appearing in a normal form for $\pi(m_1 m_2)$.

Recall the definition of $n$ from Lemma~\ref{lem:NS}.
Write
$$
\text{$m_1=a_1u_1b_1$ and $m_2=a_2u_2b_2$},
$$
where $u_1$ and $u_2$ are standard monomials of any length on variables with indices strictly between $-n$ and $n$, $a_1$ and $a_2$ are standard monomials of length $<k$ on letters with indices $\leq -n$, while $b_1$, $b_2$ are standard monomials of length $<k$ on letters with indices $\geq n$.
Now, normal words are standard monomials, and we first use the commutation relations $e_i e_j = e_j e_i + (j-i) e_{i+j}$ to write $m_1m_2$ as a linear combination of standard monomials, that is as a linear combination of words of the form
$$
m_3=a_3u_3b_3,
$$
where  $a_3$ is a standard monomial on letters with indices $\leq -n$, $u_3$ is a standard monomial  on letters with indices strictly between $-n$ and $n$, and $b_3$ is a standard monomial on letters with indices $\geq n$. Let us call those letters
with indexes $|i|\geq n$, {\it big letters}.
Note that in the course of applying the commutation relations, the total number of big letters does not increase.  Thus $a_3$ and $b_3$ have length $\leq 2k-2$, while $u_3$ may have any length.

Now we will use the reduction procedure from Lemma~\ref{lem:newreduction} to see how $\delta_0$ changes  when we get rid of  big letters in $b_3$. If the length of $b_3$ is less than $k$ we do not have to do anything.
Otherwise let $m$ be the monomial composed of the last $k$ letters of $b_3$.
According to Lemma~\ref{lem:newreduction},  to  get rid of one of the (big) letters in $m$, we find a sequence of derivations $D=\d_{a_1}\dots \d_{a_k}$, with all the $a_i \geq 1$, and some $c \in \KK^*$ such that
\beq\label{foo}
cD(g) = m+\sum c_s m'_s.
\eeq

Let $m'$ be some $m'_s$.  Now $m'$ is a standard monomial which falls into one of three cases:

\begin{itemize}
\item[I.]
$|m'| <|m|$, and $m'$ is obtained from some monomial $\tilde{g}$ in $g$ by applying $D$ and then the commutation relations;

\item[II.] $|m'| = |m|$ and $m'$ is obtained from some monomial $\tilde{g}$ in $g$ (which necessarily has length $k$) by a non-permutational action of $D$;

\item[III.]  $|m'| = |m|$  and $m'$ is obtained from some monomial $\tilde{g}$  in $g$ by a permutational action of $D$.
\end{itemize}

First, we note what happens to the number of big letters in each case.
 In case I,
since $m'$ is shorter than $m$, and all letters in $m$ are big, $m'$ contains fewer big letters than $m$.
In case
II, since the action is non-permutational, there is a letter in $m'$ which was present in $\tilde{g}$. But monomials of $g$ consist of letters which are not big, by definition of $n$. Thus the number of big letters in $m'$ is smaller than in $m$.
In case III, it may be that the number of big letters in $m'$ is still equal to $k$, which is the number of (big) letters in $m$, but certainly there are no more than $k = |m'|$ big letters in $m'$.
Further, by our choice of $n$, in case III all $e_i$ occurring in $m'$ have $i \geq 1$.

We now consider how $\delta_0$ changes throughout this process.  In situation III, as $m' \leq m$ and  $m'$ and $m$ are made of  letters $\geq e_1$, we have $\delta_0(m') = \deg(m') \leq \deg(m) = \delta_0(m)$.
In  cases I and II, $\delta_0(m')$ may be bigger than $\delta_0(m)$.
Since applying the commutation relations does not increase $\delta_0$, we may assume that the monomial $m'$ is a (possibly non standard) monomial obtained from applying $D$ to a monomial $\tilde{g}$ of $g$.

 Recall that $\bar{g}$ is the $\prec$-leading monomial of $g$, and $m$ is obtained from the monomial $\bar{g}$  by applying $D$.
 Set $\deg (D) = a_1+\dots + a_k$, so applying $D$ increases the degrees of homogenous elements by $\deg (D)$.
As $ \delta_0(e_{a+b}) \leq \delta_0(e_a) + \delta_0(e_b)$ and all the $a_i$ are $\geq 1$, we have
\[
\delta_0(m') \leq \delta_0(\tilde{g}) + \deg D.
\]
So
\begin{align*}
\delta_0(m')-\delta_0(m) & \leq \delta_0(\tilde{g}) + \deg D - \delta_0(m) \\
 & = \delta_0(\tilde{g}) + \deg D - \deg m \quad \quad  \mbox{as letters in $m$ are big} \\
 & \leq \delta_0(\tilde{g}) + |\deg m - \deg D | \\
 & = \delta_0(\tilde{g}) + | \deg \bar{g} | \\
 & \leq 2k \ell,
\end{align*}
by choice of $\ell$, as $\tilde{g}$ and $\bar{g}$ have no more than $k$ letters.

To summarize the discussion above:  in cases I and II, we remove at least one big letter and increase $\delta_0$ by no more than $2k \ell$.  In case III, we do not remove big letters and do not increase $\delta_0$.
As proved in Lemma~\ref{lem:newreduction}, after repeating this procedure finitely many times, we may write $m_1m_2$, modulo $I$,  as a linear combination of normal words.
However many times we repeat the procedure we remove a maximum of $k-1$ big letters from $b_3$, and thus we add a maximum of $2k(k-1) \ell$ to $\delta_0$.  Note that in applying this process to $b_3$, we never add any letters with indices $< -n$.

After we apply the procedure from Lemma~\ref{fw2} to $a_3$ at the other end of the word, we have found a normal form for $m_1m_2$ and have added maximum of  another $2k(k-1)\ell$ to $\delta_0$.
In other words, we have written
\[ \pi(m_1m_2) = \sum c_i \pi(w_i)\]
where the $w_i$ satisfy \eqref{3star}, as required.
\end{proof}

The  proof of  Theorem~\ref{theoremW} is now an easy combination of other results in this section.

\begin{proof}[Proof of Theorem~\ref{theoremW}]
Let $C$ be the constant given by Proposition~\ref{prop:delta}.
The discussion before that proposition shows that setting
\[ A(N) = \operatorname{span} \{ \pi(w) : w \in NS, \pi_C(w) \leq N \}\]
defines a finite filtration on $A$.
By Lemma~\ref{countw}, this filtration has polynomial growth, and so $\GK(A) < \infty$.
\end{proof}

Theorem~\ref{theoremW} gives the second part of Theorem~\ref{ithm:U}, dealing with $U(W)$.

\begin{remark}\label{rem:Wminus}
Let $\mathbf{W}_1$ be the first {\em Cartan algebra}, which is isomorphic to the subalgebra of $W$ spanned by $\{e_n : n \geq -1\}$.
This  is a simple graded Lie algebra of polynomial growth.
Similar methods to those used in Theorem~\ref{theoremW} show that $U(\mathbf{W}_1)$ has just infinite GK-dimension. To show this for all simple graded Lie algebras of polynomial growth is the subject of ongoing work.
\end{remark}

\section{Central quotients of the enveloping and symmetric algebras of $\V$}\label{V}
In this section we first prove that all the central quotients $U(\V)/(c-\kappa)$ have just infinite GK-dimension, completing the proof of Theorem~\ref{ithm:U}, and then consider the related Poisson algebras $S(\V)/(c-\kappa)$.
Because the  ideas of the proofs are similar to those in previous sections, we leave some details to the reader.

\subsection{Central quotients of the enveloping algebra of $\V$}
Fix $\kappa \in \KK$ and let $R=U(\V)/(c-\kappa)$.
Essentially the same argument as for the full Witt algebra works to show that  $R$ has just infinite GK-dimension; we give a sketch of the proof.

Note that $R$, like $U(W)$,  has a basis of standard monomials in the $e_i$.

\begin{defn}\label{def:NSprime}
For positive integers $k, n$, let
 $NS(k,n) \subset R$ be the set of standard monomials  $m$ in the $e_i$ which admit a factorisation $m=aub$, where
$a$ is a standard monomial of length $< k$ in $e_{-n}$ and smaller letters,
$u$ is a standard  monomial in $e_{1-n},\dots e_0,e_1,e_2,\dots,e_{n-1}$, and
$b$ is a standard monomial of length $< k$ in $e_n$ and bigger letters.
\end{defn}

\begin{lemma}\label{lem:NSkappa}
Let $g$ be a nonzero element of $R$, let $I = (g)$, and let $A = R/I$.
There exist positive integers  $k$ and $n$
such that  $A$ is spanned by the image of $NS(k,n)$.
\end{lemma}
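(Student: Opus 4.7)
The plan is to extend the reduction machinery of Lemmata~\ref{lem:newreduction}, \ref{fw2}, and \ref{lem:NS} from $U(W)$ to the central quotient $R = U(\V)/(c-\kappa)$. The only substantive difference between $R$ and $U(W)$ at the level of commutation relations is the appearance of the Virasoro cocycle $\frac{i^3-i}{12}\delta_{i+j,0}c$ in the bracket, which in $R$ becomes the scalar $\frac{i^3-i}{12}\delta_{i+j,0}\kappa$; the key observation driving the whole argument is that this scalar lives in strictly lower length than the main part of the bracket and so is invisible to the leading-symbol analysis.

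First I would establish the one-sided analogue of Lemma~\ref{lem:newreduction}: for any nonzero $g \in R$ with $|g|=k$, there exist $n \in \ZZ_{\geq 1}$ and $\ell \in \ZZ$ such that every standard monomial $m = e_{j_1}\cdots e_{j_k}$ with $n \leq j_1 \leq \cdots \leq j_k$ admits a rewriting $m = h + \sum c_t m_t$ with $h \in I=(g)$ and the $m_t$ standard monomials in letters $\geq \ell$, each either shorter than $m$ or containing at least one letter $e_i$ with $i < n$. The proof will be a word-for-word copy of the Witt case once I verify the leading-symbol identity $\gr(\del_a(p)) = d_a(\gr(p))$. Equipping $R$ with the length filtration $F_\bullet$, one has $\gr R = S(W)$: indeed $c - \kappa$ lies in $F_1$ with leading symbol $\overline c$, so quotienting by $c-\kappa$ in $U(\V)$ kills $\overline c$ in $\gr U(\V) = S(\V)$. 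The cocycle contribution $\frac{i^3-i}{12}\delta_{i+j,0}\kappa$ sits in $F_0$, strictly lower in length than the length-one term $(j-i)e_{i+j}$, and therefore contributes nothing to the leading symbol of $\del_a(p)$. Iterated derivations $\del_{j_1-i_k}\cdots \del_{j_k-i_1}(g)$ are thus governed by exactly the same Poisson computation on $\gr R = S(W)$ that drove the proof of Lemma~\ref{lem:newreduction}.

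Next, for the symmetric version (the analogue of Lemma~\ref{fw2}) I would handle standard monomials $m = e_{j_1}\cdots e_{j_k}$ with $j_1 \leq \cdots \leq j_k \leq -n$ either by repeating the argument with indices reversed, or by using the Lie algebra automorphism $\Phi$ of $\V$ defined by $\Phi(e_i) = -e_{-i}$, $\Phi(c) = -c$; a direct check shows this preserves the Virasoro bracket, and it identifies $R = U(\V)/(c-\kappa)$ with $U(\V)/(c+\kappa)$, interchanging the two reduction regimes.

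With both reduction formulas in hand, I would iterate them on the high- and low-index tails of an arbitrary standard monomial, exactly as in the proof of Lemma~\ref{lem:NS}, thereby writing any standard monomial modulo $I$ as a linear combination of elements of $NS(k,n)$ for $n$ chosen to dominate the constants $n_1, n_2, |\ell_1|, |\ell_2|$ produced by the two reductions. The only genuinely new ingredient is the leading-symbol identity in $R$, so the hard part is really just confirming that the cocycle does not disturb the graded picture; once that is secured, every subsequent step of the Witt arguments carries over unchanged.
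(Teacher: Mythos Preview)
Your proposal is correct and follows essentially the same route as the paper: both arguments rest on the single observation that the Virasoro cocycle contributes only terms of strictly lower length, so the leading-term analysis driving Lemmata~\ref{lem:newreduction} and \ref{fw2} goes through unchanged and the iteration of Lemma~\ref{lem:NS} applies verbatim. Your write-up is in fact more explicit than the paper's (spelling out $\gr R = S(W)$ and the automorphism $\Phi$ of $\V$ with $\Phi(c)=-c$, which correctly identifies $U(\V)/(c-\kappa)$ with $U(\V)/(c+\kappa)$), but the substance is the same.
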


\begin{proof}
The key point is that the reduction formulae in Lemmata~\ref{lem:newreduction} and \ref{fw2} still hold.
As before, let $\del_a = [\blank, e_a]$ as a linear operator on $R$ and consider the effect of applying some $\del_{a_1} \cdots \del_{a_k} $ to a standard monomial of length $k$.  If $\kappa \neq 0$ and some expression of the form $[e_{-a_j}, e_{a_j}]$ has been computed, we may obtain some standard monomials of length $< k$ in the result; but the leading term will have length $k$ and will be given by the procedures in the previous section.  Thus the proof of Lemma~\ref{lem:NS} goes through in this situation, almost without change.
\end{proof}

 \begin{theorem}\label{theoremWkappa}
Let $I=(g)$ be a two-sided ideal in $R$ generated by one nonzero element $g\in R$. Then $A=R/I$ has
polynomial growth.
\end{theorem}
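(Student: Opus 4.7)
The plan is to mirror the proof of Theorem \ref{theoremW}, substituting Lemma \ref{lem:NSkappa} for Lemma \ref{lem:NS} to supply the spanning set of normal words. First, I apply Lemma \ref{lem:NSkappa} to $g$ to produce positive integers $k, n$ such that $A = R/I$ is spanned by the image of $NS(k,n)$ under $\pi : R \to A$. The combinatorial count in Lemma \ref{countw} depends only on the shape of standard monomials in the $e_i$, and so it applies verbatim in $R$, yielding polynomial growth for the number of normal words of $\delta_C$-weight $\leq N$ for any fixed $C$.

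Next, I establish the analog of Proposition \ref{prop:delta}: for a suitable constant $C$, the function $\delta_C$ satisfies condition \eqref{2star} in $R$. Given $m_1 = a_1 u_1 b_1$ and $m_2 = a_2 u_2 b_2$ in $NS$, I rewrite $m_1 m_2$ modulo $I$ as a linear combination of normal words in two stages, just as in Section \ref{fw}: first, use the Virasoro commutation relations to produce a linear combination of standard monomials $a_3 u_3 b_3$; then, iteratively apply the $R$-analogs of Lemmata \ref{lem:newreduction} and \ref{fw2} (which, as noted in the proof of Lemma \ref{lem:NSkappa}, go through essentially unchanged) to strip big letters from the flanks until $|a_3|, |b_3| < k$.

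The main obstacle, and really the only new feature compared with the $U(W)$ case, is the scalar term $\frac{i^3-i}{12}\kappa$ appearing in $[e_i, e_{-i}]$ after setting $c = \kappa$. Each such scalar has length $0$ and $\delta_0$-weight $0$, so replacing $e_i e_{-i}$ (of $\delta_0$-weight $2|i|$) by the scalar only decreases $\delta_0$; consequently the commutation step still never increases $\delta_0$. Likewise, within the derivation reduction, the additional scalar contributions produced when some $\del_a$ acts on a letter $e_{-a}$ of $g$ are standard monomials of length $\leq k-2$ and of strictly smaller $\delta_0$, hence are harmlessly absorbed into the "lower-order" terms already accounted for in the proof of Proposition \ref{prop:delta}; the same bookkeeping thus delivers the bound with $C = 4k^2 \ell$. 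Combining this filtration with the polynomial count of normal words shows that $A(N) = \operatorname{span}\{\pi(w) : w \in NS(k,n),\ \delta_C(w) \leq N\}$ is a finite filtration on $A$ of polynomial growth, so $\GK(A) < \infty$, as required.
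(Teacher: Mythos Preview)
Your proposal is correct and follows essentially the same approach as the paper's proof, which likewise isolates the extra scalar terms from the Virasoro relation as a new subcase (the paper calls it case I$'$) to be absorbed into the existing case analysis of Proposition~\ref{prop:delta}. One small inaccuracy: the scalar contributions have length $\leq k-1$ rather than $\leq k-2$, and they need not have strictly smaller $\delta_0$; what matters is that they fall into case~I (shorter length, hence fewer big letters), so the same $2k\ell$ bound from Proposition~\ref{prop:delta} applies and your conclusion with $C = 4k^2\ell$ goes through.
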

\begin{proof}
We may define the functions $\delta_C$ just as with $U(W)$.  The only part of the argument which is different for $R$ is the proof of Proposition~\ref{prop:delta}.
When we compute
\beq\label{RFagain}
 c \del_{a_1} \dots \del_{a_k} (g) = m + \sum_s c_s m'_s,
\eeq
as in \eqref{foo},
consider some $m' = m'_s$ as before.
In addition to cases I, II, III as in the proof of Proposition~\ref{prop:delta}, we may have
\begin{itemize}
\item[I'.] $|m'|< |m|$, and $m'$ is obtained from some monomial $\tilde{g}$ in $g$ by applying  $\del_{a_1} \dots \del_{a_k}$ and then applying the relation
\[ [ e_{-i}, e_i ] = 2ie_{0} + \frac{i-i^3}{12} \delta_{i+j,0} \kappa.\]
\end{itemize}
As before, applying this relation does not increase $\delta_0$, so we may assume that $m'$ is a (possibly non standard) monomial obtained from applying $D$ to a monomial $\tilde{g}$ of $g$.
 The argument of Proposition~\ref{prop:delta} goes through with only minor changes, and the result follows just as in the proof of Theorem~\ref{theoremW}.
\end{proof}

Theorem~\ref{theoremWkappa} completes the proof of Theorem~\ref{ithm:U} by establishing the part dealing with $U(\V)$.

\begin{remark}\label{rem:locVir}
By the same method as in the proof of Theorem~\ref{theoremWkappa}, one may show that the localised enveloping algebra
$U(\V)  \otimes_{\K[c]}\K(c) $, considered as an algebra over $\K(c)$, has just infinite GK-dimension.
It follows, using a similar argument to the proof of \cite[Lemma~3.10]{KL}, that $U(\V)  \otimes_{\K[c]}\K(c) $ has just infinite GK-dimension considered as a $\K$-algebra.  We leave the details to the reader.
\end{remark}

\subsection{Central quotients of the symmetric algebra of $\V$}

In this subsection, let $\kappa \in \KK$ and let $R = S(W)/(c-\kappa)$.
Since $c-\kappa$ is Poisson central, $R$ is a Poisson algebra; in fact if we filter $U(\V)$ by setting $|e_i| =1$ and $|c| = 0$, then
$R$ is the associated graded ring of $U(\V)/(c-\kappa)$.
Note that if we define $d_a = \{ \blank, x_a\}$ and $\del_a = [\blank, e_a]$ as before, then \eqref{greq} still holds.

Similar arguments to those that have gone before prove:

\begin{theorem}\label{theoremPkappa}
Let $g$ be a nonzero element of $R$ and let $I = \{(g)\}$ be the Poisson ideal generated by $g$.
Then $A = R/I$ has polynomial growth.
\end{theorem}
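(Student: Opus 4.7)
The plan is to repeat, in the Poisson setting, the arguments developed for Theorems~\ref{theoremW} and \ref{theoremWkappa}, using at each step the correspondence \eqref{greq} between the Poisson derivations $d_a = \{\blank, x_a\}$ on $R$ and the commutator derivations $\del_a$ on $U(\V)/(c-\kappa)$. As in previous sections, after a standard graded reduction (cf.\ the opening of the proof of Theorem~\ref{theoremP}) it suffices to take $g$ nonzero and degree-homogeneous. First I would establish direct Poisson analogues of Lemmata~\ref{lem:newreduction} and \ref{fw2}. With $\overline g$ the $\prec$-leading monomial of $g$, $k=|g|$, and a suitable $n$, set $h = d_{j_1-i_k}\cdots d_{j_k-i_1}(g)\in I$; the same permutational/non-permutational dichotomy as in Lemma~\ref{lemmaP1} shows that every standard monomial $m = x_{j_1}\cdots x_{j_k}$ with $n\leq j_1\leq\cdots\leq j_k$ can be written modulo $I$ as a linear combination of strictly $<$-smaller standard monomials in letters of bounded index. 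The one new phenomenon, as in Theorem~\ref{theoremWkappa}, is that $\{x_p, x_{-p}\}$ contributes a scalar multiple of $\kappa$; such contributions strictly shorten length and are therefore strictly $<$-smaller, posing no obstruction to the reduction. The mirror statement for monomials with $j_k \leq -n$ is obtained by the analogous argument using derivations $d_a$ with $a \leq -1$; note that the automorphism $\Phi(x_i) = -x_{-i}$ of $S(\V)$ sends $c-\kappa$ to $-(c+\kappa)$ and so does not preserve $(c-\kappa)$ in general, which is why we argue directly.

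Second, I would define $NS(k,n)\subset R$ as in Definition~\ref{def:NSprime} (with $x_i$ in place of $e_i$) and iterate the two reduction lemmata to show that $A$ is spanned by the image of $NS(k,n)$, paralleling Lemma~\ref{lem:NSkappa}. Then I would introduce the absolute-degree function $\delta_C(x_{i_1}\cdots x_{i_k}) = |i_1|+\cdots+|i_k|+C$. The counting argument of Lemma~\ref{countw} is purely combinatorial and yields at once a polynomial bound on the number of elements of $NS(k,n)$ of $\delta_C$-value at most $N$.

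The principal remaining step, and the one I expect to require the most care, is the Poisson analogue of Proposition~\ref{prop:delta}: showing that for a sufficiently large constant $C$ the function $\delta_C$ descends to a well-defined filtration on $A$. Here the commutativity of $R$ is actually a simplification: the product $m_1m_2$ of two standard monomials is itself a standard monomial with $\delta_0(m_1m_2) = \delta_0(m_1)+\delta_0(m_2)$, so no initial normalisation is needed and the only source of $\delta_0$-growth is the reduction of the positive and negative tails to length $<k$. Running the bookkeeping of Proposition~\ref{prop:delta} with $d_a$ in place of $\del_a$, each application of the reduction formula to a $k$-letter block removes at least one big letter and contributes at most $2k\ell$ to $\delta_0$ (with $\ell = \max\{|i|: x_i \text{ features in }g\}$), while the cocycle-generated terms have strictly smaller $\delta_0$ and are harmless. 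Summing over at most $k-1$ reductions at each end gives $C = 4k^2\ell$, and Theorem~\ref{theoremPkappa} follows by combining the spanning property of $NS(k,n)$, the filtration property of $\delta_C$, and the polynomial counting bound of Lemma~\ref{countw}.
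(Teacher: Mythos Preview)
Your approach is essentially the paper's: establish the two-sided Poisson reduction formulae, span $A$ by the image of $NS(k,n)$, build a filtration via $\delta_C$, and count. The paper packages this a little differently, lifting $g$ to $g'\in U(\V)/(c-\kappa)$ with $\gr(g')=g$ and reading off the Poisson reduction from the already-proved enveloping-algebra reduction via \eqref{greq}; since $\gr$ kills all terms of length $<k$, only cases~II and~III of Proposition~\ref{prop:delta} survive, a minor economy. Your direct argument in $R$ is equally valid, and your observation that commutativity eliminates the preliminary normalisation step in Proposition~\ref{prop:delta} is correct and a genuine simplification.

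There is one small gap: the opening reduction to degree-homogeneous $g$ is not justified. The argument you cite from the proof of Theorem~\ref{theoremP} relies on $S(W_+)$ being \emph{locally finite} in the degree grading, and that fails for $R=\K[\dots,x_{-1},x_0,x_1,\dots]$, since for instance every $x_{-n}x_n$ lies in degree~$0$. Fortunately the reduction is unnecessary: none of the lemmata you invoke afterwards (the Poisson analogues of Lemmata~\ref{lem:newreduction} and~\ref{fw2}, and the bookkeeping of Proposition~\ref{prop:delta}) require $g$ to be homogeneous, and indeed the paper proves Theorems~\ref{theoremW}, \ref{theoremWkappa}, and~\ref{theoremPkappa} for arbitrary nonzero $g$ without this step. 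Simply delete that sentence and the rest of your argument goes through.
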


\begin{proof}
The reduction process works as before:
writing $g = \gr(g')$ and computing
\[ h = c \del_{a_1}\dots \del_{a_k}(g') = m + \sum_s c_s m'_s\]
as in \eqref{RFagain}, by \eqref{greq}
\[ \gr(h) = \gr(m) + \sum_t c_t \gr(m'_t),\]
where the only $m'_t$ surviving have length $k$.
Thus as in Lemma~\ref{lem:NSkappa} there are $k$ and $n$ so that $A$ is spanned by the image of $\gr(NS(k,n))$.

Comparing $\delta_0(\gr(m'))$ with $\delta_0(\gr(m))$ as in the proof of Proposition~\ref{prop:delta} we see that only cases II and III occur.
The conclusion of Proposition~\ref{prop:delta} still holds, and so as in the proof of Theorem~\ref{theoremW} $\GK(A) < \infty$.
\end{proof}

\begin{remark}\label{rem:SWminus}
Similarly, one may show that $S(\mathbf{W}_1)$ has just infinite GK-dimension.
We omit the proof.
\end{remark}

\section{Applications}

In this section we give several applications of Theorem~\ref{ithm:U}.  We first give a short proof that Verma modules for $\V$ are faithful over the appropriate central factor of $U(\V)$.  (A more direct proof is an unpublished result of Nolan Wallach \cite{Wallach}.)  We next prove that $U(W_+)$, $U(W)$, and $U(\V)$ all satisfy the ascending chain condition on completely prime ideals.
As a consequence, these algebras are {\em Hopfian}:  they are not isomorphic to any proper quotient.

\subsection{Annihilators of induced modules}

Fix $\lambda, \kappa \in \CC$.  Note that the Virasoro algebra $\V$ has a triangular decomposition:  define $\mf n_+ := \CC(e_n:  n \geq 1)$, $\mf h := \CC(c, e_0)$, and $\mf n_- := \CC(e_n : n \leq -1)$.  Let   $\mf b_+ := \mf n_+ \oplus \mf h$.
Let $\CC_{\kappa, \lambda}$ be the one-dimensional representation of $\mf b_+$ where $\mf n_+$ acts trivially, $c$ acts as $\kappa$, and $e_0$ acts as $\lambda$.
Then define the {\em Verma module} $M_{\kappa, \lambda}$ to be $\U(\V) \otimes_{\U(\mf b_+)} \CC_{\kappa, \lambda}$.
It is immediate that
\[M_{\kappa, \lambda} \cong \U(\V)/\U(\V) ( c-\kappa, e_0 -\lambda, e_n: n \geq 1)\]
 and that $M_{\kappa, \lambda}$ is non-positively graded, with $\dim (M_{\kappa, \lambda})_{-n} = \mc P(n)$, the $n$'th partition number.

Verma modules are  examples of the larger class we call, slightly imprecisely, {\em induced modules}.  These are modules of the form $M = U(\V) \otimes_{U(\mf b_+)} M'$, where $M'$ is a
  representation of $\mf{b}_+$.
Besides Verma modules, examples include {\em logarithmic representations}, where $\dim_\CC M' < \infty$ and where $\mf n_+$ acts trivially on $M'$, $c$ acts as a scalar, and $e_0$ acts as a non-semisimple matrix.  These representations are important in logarithmic conformal field theory, see \cite{GK}.

 {\em Whittaker modules} \cite{OW} form another class of examples.
Here let $\mf n' = \mf n \oplus \CC c$ and let $M''$ be the one-dimensional $\mf n'$ module  where $c$ acts as a scalar, and the $e_n$ act trivially for $n \geq 3$.  The module
\[ M = U(\V) \otimes_{U(\mf n')} M'' \cong U(\V)\otimes_{U(\mf b_+)} U(\mf b_+) \otimes_{U(\mf n')} M''\]
is a {\em Whittaker module}.
If $e_1, e_2$ act nontrivially on $M''$, then $M$ is simple by \cite[Corollary~4.5]{OW}.
All of these examples are annihilated by some $c- \kappa$, where $\kappa \in \CC$, and so have  {\em central character} $\kappa$.

Using
 Theorem~\ref{ithm:U}, we may immediately compute the annihilator of an induced module.

 \begin{theorem}\label{thm:induced}
 Let $M'$ be a representation of $\mf b_+$ with central character $\kappa \in \CC$.
 Let $M = U(\V) \otimes_{U(\mf b_+)} M'$.
 Then $\Ann_{\U(\V)} M = (c-\kappa)$.
 \end{theorem}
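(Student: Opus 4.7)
The containment $(c-\kappa) \subseteq \Ann_{\U(\V)} M$ is immediate from the hypothesis that $c$ acts as the scalar $\kappa$ on $M'$ and hence on $M$. For the reverse inclusion, the plan is to produce a cyclic submodule $N \subseteq M$ whose annihilator we can pin down exactly as $(c-\kappa)$, and then invoke the containment $\Ann M \subseteq \Ann N$.

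We may assume $M' \neq 0$, so pick any nonzero $v' \in M'$ and set $v = 1 \otimes v' \in M$. The PBW theorem gives a vector-space isomorphism $M \cong \U(\mf n_-) \otimes_\CC M'$ under which $u \cdot v$ corresponds to $u \otimes v'$ for $u \in \U(\mf n_-)$. In particular $v \neq 0$, and the map $\U(\mf n_-) \to M$, $u \mapsto u \cdot v$, is injective. Let $N = \U(\V) v$; since $c$ is central with $cv = \kappa v$, we have $(c-\kappa) \subseteq \Ann N$.

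The heart of the proof is to show $\GK N = \infty$. Choose a finite-dimensional generating subspace of $\U(\V)$ containing $1$, $e_{-1}$, and $e_{-2}$; for example, $V = \spann\{1, e_{-2}, e_{-1}, e_0, e_1, e_2\}$ works, since $c$ appears in $[e_2, e_{-2}]$ and so $V$ generates $\U(\V)$ as an algebra. Then $V^n v$ contains $U_n(\mf n_-) v$, where $U_n(\mf n_-)$ denotes the $n$-th step of the length filtration of $\U(\mf n_-) \cong \U(W_+)$ with respect to $\{e_{-1}, e_{-2}\}$. Since $\U(W_+)$ has intermediate growth (as discussed in the introduction), $\dim U_n(\mf n_-)$ grows faster than any polynomial in $n$. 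Combining this with the injectivity of the orbit map yields $\dim V^n v \geq \dim U_n(\mf n_-)$, and therefore $\GK N = \infty$.

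Finally, suppose toward contradiction that $\Ann N \supsetneq (c-\kappa)$. Then the image of $\Ann N$ in $R = \U(\V)/(c-\kappa)$ is a nonzero two-sided ideal, and Theorem~\ref{theoremWkappa} forces $\U(\V)/\Ann N$ to have polynomial growth. But $N$ is cyclic over $\U(\V)/\Ann N$, so $\GK N \leq \GK(\U(\V)/\Ann N) < \infty$, contradicting the previous paragraph. Hence $\Ann N = (c-\kappa)$, and $\Ann M \subseteq \Ann N = (c-\kappa)$ completes the proof. The main thing to arrange carefully is that the generating subspace $V$ contains enough of $\mf n_-$ for $V^n v$ to witness a superpolynomially growing filtration of $\U(\mf n_-)$; the rest is a clean application of just-infinite GK-dimension.
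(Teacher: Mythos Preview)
Your proof is correct and follows essentially the same approach as the paper: both arguments use the PBW decomposition $M \cong U(\mf n_-) \otimes_{\CC} M'$ to exhibit superpolynomial growth inside $M$, and then invoke the just-infinite GK-dimension of $U(\V)/(c-\kappa)$ (Theorem~\ref{theoremWkappa}) to derive a contradiction from a strictly larger annihilator. The only cosmetic difference is that you pass explicitly to the cyclic submodule $N = U(\V)v$ before computing growth, whereas the paper works with $M$ itself and cites \cite[Proposition~5.1(d)]{KL} for the bound $\GK M \leq \GK(U(\V)/\Ann M)$; your version is slightly more self-contained in that respect.
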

 \begin{proof}
 Clearly $(c-\kappa) M = 0$.

 Let $\mc P$ be the set of {\em negative partitions} $\underline{\lambda} = (\lambda_1, \dots, \lambda_k) $ where the $\lambda_i $ are negative integers with $\lambda_1 \leq \lambda_2 \leq \dots \leq \lambda_k $.
 If $\underline{\lambda} = (\lambda_1, \dots, \lambda_k) \in \mc P$, let $e_{\underline \lambda} = e_{\lambda_1} \dots e_{\lambda_k}$.
 If $0 \neq m \in M'$, it follows from the Poincar\'e-Birkhoff-Witt theorem that the elements $\{ e_{\underline{\lambda}} \otimes m: \underline{\lambda}\in \mc P\}$ are linearly independent in $M$, and thus, as these elements are in bijection with partitions, that $M$ has subexponential growth and infinite Gelfand-Kirillov dimension.

 Let $K = \Ann_{\U(\V)} M$.  If $K \supsetneqq (c-\kappa)$ then by Theorem~\ref{ithm:U} $U(\V)/K$ has polynomial growth and thus, by \cite[Proposition~5.1(d)]{KL}, so does $M$.
 This contradiction shows that $K = (c-\kappa)$.
 \end{proof}

 \begin{corollary}\label{cor:Verma}
For any $\kappa, \lambda \in\CC$, the Verma module $M_{\kappa, \lambda}$ is a faithful $\U(\V)/(c - \kappa)$-module. \qed
\end{corollary}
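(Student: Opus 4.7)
The plan is to deduce Corollary~\ref{cor:Verma} as a direct specialization of Theorem~\ref{thm:induced}. By the definition given just before that theorem, the Verma module is
\[ M_{\kappa,\lambda} = \U(\V) \otimes_{\U(\mf b_+)} \CC_{\kappa,\lambda}, \]
so taking $M' = \CC_{\kappa,\lambda}$ presents $M_{\kappa,\lambda}$ as an induced module in the precise sense of Theorem~\ref{thm:induced}. Since $c$ acts on the one-dimensional $\mf b_+$-module $\CC_{\kappa,\lambda}$ as multiplication by $\kappa$, this $M'$ has central character $\kappa$ by the definition in the preceding paragraph.

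Applying Theorem~\ref{thm:induced} with this $M'$ yields
\[ \Ann_{\U(\V)}(M_{\kappa,\lambda}) = (c-\kappa), \]
and the claim that $M_{\kappa,\lambda}$ is a faithful $\U(\V)/(c-\kappa)$-module is then just a rephrasing: the annihilator of $M_{\kappa,\lambda}$ in the quotient $\U(\V)/(c-\kappa)$ is the image of $(c-\kappa)$, namely the zero ideal.

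No obstacle specific to the corollary arises; all the content lies in Theorem~\ref{thm:induced}, and the strategy there is one I would follow unchanged. First, using PBW one observes that the vectors $e_{\underline{\lambda}} \otimes m$, indexed by negative partitions $\underline{\lambda}$ and any fixed nonzero $m \in M'$, are linearly independent in $M$, so $M$ has partition-type subexponential growth and in particular infinite Gelfand--Kirillov dimension. Second, one invokes Theorem~\ref{ithm:U} to rule out any strictly larger annihilator: such an annihilator would force $\U(\V)/\Ann(M)$ to have polynomial growth, and by \cite[Proposition~5.1(d)]{KL} the polynomial growth would propagate to $M$, contradicting its infinite GK-dimension. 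For the corollary itself, the only verification to make is the essentially tautological identification of a Verma module as an induced module of the form required by the theorem.
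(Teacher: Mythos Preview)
Your proposal is correct and matches the paper's approach exactly: the corollary is stated with a \qed and no separate proof, since $M_{\kappa,\lambda}$ is by definition an induced module with central character $\kappa$, so Theorem~\ref{thm:induced} applies immediately. Your recap of the proof of Theorem~\ref{thm:induced} is accurate but not needed for the corollary itself.
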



 Corollary~\ref{cor:Verma} is an unpublished result of Nolan Wallach \cite{Wallach}, and is independently due to Olivier Mathieu in unpublished work; see \cite[footnote 2, p. 496]{CM}.  We thank Rupert Wei Tze Yu for pointing out this reference to us.

It is known \cite[Theorem~1.2]{FF} that for any $\kappa$, the module $M_{\kappa, \lambda}$ is simple for generic $\lambda$.  Thus it follows immediately that $\U(\V)/(c-\kappa)$ is primitive.

\begin{corollary}\label{cor:log or Whittaker}
Let $N$ be a logarithmic representation or a Whittaker module over  $\V$.  Then $\Ann_{\U(\V)}(N) = (c - \kappa)$ for some $\kappa \in \CC$. \qed
\end{corollary}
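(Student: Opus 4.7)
The plan is to realise both classes of modules as induced modules in the sense of Theorem~\ref{thm:induced} and then apply that theorem directly. The only real content is to recognise that the central character is well-defined on the intermediate $\mf b_+$-module in each case.

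First I would handle the logarithmic case, which is essentially immediate from the definitions given in the paper: a logarithmic representation is, by construction, of the form $N = U(\V) \otimes_{U(\mf b_+)} M'$ with $\dim_{\CC} M' < \infty$ and with $c$ acting on $M'$ as a scalar $\kappa$. So $M'$ has central character $\kappa$, and Theorem~\ref{thm:induced} gives $\Ann_{U(\V)}(N) = (c - \kappa)$.

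For the Whittaker case I would use associativity of induction. Starting from $N = U(\V) \otimes_{U(\mf n')} M''$ where $M''$ is one-dimensional with $c$ acting by some scalar $\kappa$, I would rewrite
\[
N \;\cong\; U(\V) \otimes_{U(\mf b_+)} M', \qquad M' := U(\mf b_+) \otimes_{U(\mf n')} M''.
\]
Since $c$ lies in the centre of $U(\mf b_+)$ and acts as $\kappa$ on $M''$, one checks that $c$ acts as $\kappa$ on $M'$: for $u \in U(\mf b_+)$ and $m \in M''$, $c \cdot (u \otimes m) = uc \otimes m = u \otimes cm = \kappa (u \otimes m)$. Thus $M'$ is a $\mf b_+$-module of central character $\kappa$, and a second application of Theorem~\ref{thm:induced} yields $\Ann_{U(\V)}(N) = (c - \kappa)$.

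There is no substantial obstacle here; the statement is essentially a corollary of Theorem~\ref{thm:induced} once the two kinds of modules are recognised as induced from $\mf b_+$. The only minor point to be careful about is the reassociation of the tensor product for Whittaker modules and the verification that $c$ acts as a scalar on the induced $\mf b_+$-module $M'$, both of which are formal.
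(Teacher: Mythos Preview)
Your argument is correct and is exactly what the paper intends: the corollary is stated with a \qed and no proof, since both classes were explicitly described just before Theorem~\ref{thm:induced} as induced from $\mf b_+$ with $c$ acting as a scalar (the reassociation $U(\V)\otimes_{U(\mf n')} M'' \cong U(\V)\otimes_{U(\mf b_+)} \bigl(U(\mf b_+)\otimes_{U(\mf n')} M''\bigr)$ is even written out in the text). Your verification that $c$ acts as $\kappa$ on the intermediate module $M'$ is the one detail the paper leaves implicit, and it is handled correctly.
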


\subsection{Completely prime ideals}

In \cite[Conjecture~1.3]{PS}, it is conjectured that $U(W_+)$ satisfies the ascending chain condition on two-sided ideals.
We cannot prove this, but we do show

\begin{proposition}\label{prop:ACC}
The algebras
$U(W_+)$ and $U(\V)$ satisfy the ascending chain condition (ACC) on completely prime ideals.
\end{proposition}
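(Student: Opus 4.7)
The plan is to combine Theorem~\ref{ithm:U} with a general bound on chains of primes in finitely generated domains of finite Gelfand--Kirillov dimension. Suppose for contradiction that we have a strictly ascending infinite chain $P_1 \subsetneq P_2 \subsetneq \cdots$ of completely prime ideals in $R$, where $R$ is one of $U(W_+)$, $U(W)$, or $U(\V)$. Replacing $P_1$ by $P_2$ if necessary, we may assume $P_1 \neq 0$; then $R/P_1$ is a finitely generated $\KK$-algebra which is a domain.

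The next step is to show that $R/P_1$ has finite GK-dimension. For $R \in \{U(W_+), U(W)\}$ this is immediate from Theorem~\ref{ithm:U}. For $R = U(\V)$, consider $P_1 \cap \KK[c]$, which is a completely prime ideal of the central subring $\KK[c]$ and hence equals either $(c-\kappa)$ for some $\kappa \in \KK$ or $0$. In the former case, $P_1$ descends to a nonzero completely prime ideal of $U(\V)/(c-\kappa)$, and Theorem~\ref{theoremWkappa} yields the conclusion. In the latter case, we localize the whole chain at the central Ore set $\KK[c]\setminus\{0\}$; by Remark~\ref{rem:locVir}, the localized algebra $U(\V)\otimes_{\KK[c]}\KK(c)$ has just infinite GK-dimension as a $\KK$-algebra, and the localized $P_1$ remains a nonzero proper completely prime ideal, reducing us to that setting.

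It remains to bound the length of the chain $\bar P_2 \subsetneq \bar P_3 \subsetneq \cdots$ of completely prime ideals in the finitely generated domain $S = R/P_1$ of finite GK-dimension $d$. Since each $S/\bar P_i$ is a domain and each $\bar P_{i+1}/\bar P_i$ is a nonzero ideal containing a regular element, the aim is to establish the strict decrement $\GK(S/\bar P_{i+1}) < \GK(S/\bar P_i)$ at each stage, which would bound the chain by at most $d+1$ steps and give the desired contradiction.

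The main obstacle is precisely this last dimension drop: it is the Borho--Kraft inequality in the noetherian setting, requiring a normalizing regular element in $\bar P_{i+1}/\bar P_i$, but our algebras are not noetherian and normality of such elements is not automatic. In our specific situation, however, one can extract the drop either by iterating the reduction formulae of Sections~\ref{P} and~\ref{U} (Lemmas~\ref{lemmaP1} and~\ref{lem:newreduction}), which apply verbatim to any quotient of $R$ by a $W_+$- or $\V$-stable submodule (as noted in the remark following Lemma~\ref{lemmaP1}), thereby forcing a drop at each strict enlargement of the ideal; or by appealing to the general Krause--Lenagan bounds on chains of primes in affine algebras of finite GK-dimension \cite{KL}. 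Either route closes the argument.
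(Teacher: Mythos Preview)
Your overall strategy is the same as the paper's: pass to a quotient (or localisation) of finite GK-dimension and then bound the chain by the drop in GK-dimension at each strict containment of completely prime ideals.  The difference is that you have manufactured an obstacle where there is none.  The ``main obstacle'' you identify --- that the dimension drop $\GK(S/\bar P_{i+1}) < \GK(S/\bar P_i)$ needs noetherianity or a normalising regular element --- is simply false.  The result you want is exactly \cite[Proposition~3.15]{KL}: if $S$ is a finitely generated domain of finite GK-dimension and $J$ is a nonzero ideal, then $\GK(S/J)\leq \GK(S)-1$.  The only input is that $J$ contains a regular element, which is automatic since $S$ is a domain; no normality, no noetherian hypothesis.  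This is precisely what the paper invokes, and it immediately bounds the chain length by $\GK(S)$.  Your suggested workaround via the reduction formulae of Sections~\ref{P} and~\ref{U} is vague (those lemmata bound the GK-dimension of one specific quotient, not a further drop at each subsequent prime), and in any case unnecessary.

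Two smaller points.  First, $\KK$ need not be algebraically closed, so $P_1\cap\KK[c]$, when nonzero, is $(f(c))$ for an irreducible $f\in\KK[c]$ rather than $(c-\kappa)$; the paper handles this by noting that $U(\V)/(f(c))\cong U_{\KK'}(\V)/(c-x)$ for $\KK'=\KK[x]/(f)$.  Second, your case split on $P_1\cap\KK[c]$ is slightly off: you must ensure that the localised chain remains strictly increasing, which fails if some later $P_i$ meets $\KK[c]$ nontrivially while $P_1$ does not.  The paper avoids this by splitting on whether $\bigcup_n P_n$ meets $\KK[c]$, and by checking that each $U(\V)/P_n$ is $\KK[c]$-torsionfree so that strict inclusions survive the localisation.
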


\begin{proof}
We first note that any ring $R$ of finite or just infinite GK-dimension satisfies ACC on completely prime ideals.
Letting $P_0$ be the first ideal in the chain, it is sufficient to show that $R/P_0$ has ACC on completely prime ideals.
Thus we may replace $R$ by $R/P_0$ and assume that  $R$ is a domain with $\GK R <\infty$.
Now if $I$ is a nonzero ideal of $R$, then by \cite[Proposition~3.15]{KL}, $\GK R/I \leq \GK R -1$,
so by induction the length of a chain of completely prime ideals is bounded by $\GK R$.
Thus by Theorem~\ref{ithm:U}, $U(W_+)$ and $U(\V)/(c-\kappa)$ (for any $\kappa \in \K$) have ACC on completely prime ideals.

In fact, note that if $f[x] \in\KK[x]$ is irreducible, then $U(\V)/(f(c))$ has just infinite GK-dimension and thus ACC on completely prime ideals.
To see this, let $\KK'$ be the extension field $\KK[x]/f(x)$ of $\KK$, and note that
\[U(\V)/(f(c) ) = U_{\KK}(\V)/(f(c)) \cong U_{\KK'}(\V)/(c-x).\]
This last has just infinite GK-dimension by Theorem~\ref{ithm:U}.

We now consider an ascending chain $P_1 \subseteq P_2 \subseteq \cdots$ of completely prime ideals of $U(\V)$.
If $\bigcup P_n$ contains a nonzero element of $\K[c]$, then as the $P_n$ are prime and $c$ is central, some $P_n$ contains an irreducible polynomial $f(c) \in \KK[c]$.
By the first part of the proof, therefore, the chain stabilizes.

So we may assume that each $P_n \cap \K[c] = 0$.
As $P_n$ is prime, each $U(\V)/P_n$ is $\K[c]$-torsionfree.
Thus if $P_n \neq P_{n+1}$, then $(P_{n+1}/P_n) \otimes_{\K[c]}\K(c) \neq 0$ and so
\[ P_n \otimes_{\K[c]}\K(c) \neq P_{n+1}  \otimes_{\K[c]}\K(c) .\]
Further, these ideals are completely prime as 
\[U(\V)  \otimes_{\K[c]}\K(c)/ P_n  \otimes_{\K[c]}\K(c) \cong (U(\V)/P_n)  \otimes_{\K[c]}\K(c)\]
 is a domain.

Thus it suffices to show that $U(\V)  \otimes_{\K[c]}\K(c) $ has ACC on completely prime ideals.
By Remark~\ref{rem:locVir}, $U(\V)  \otimes_{\K[c]}\K(c) $ has just infinite GK-dimension.
Thus by the first part of the proof, $U(\V)  \otimes_{\K[c]}\K(c) $ satisfies the ACC on completely prime ideals.
\end{proof}

\subsection{The Hopfian and Bassian properties}

To end the paper, we consider two ring-theoretic properties which are related to noetherianity.  A ring $R$ is {\em Hopfian} if $R$ is not isomorphic to any proper quotient $R/J$ (equivalently, any epimorphism from $R \to R$ is an isomorphism).
More strongly, $R$ is {\em Bassian} if there is no injection of $R$ into any proper quotient $R/J$.
We thank Lance Small for introducing us to these concepts.

\begin{proposition}\label{prop:HB}
The algebras $U(W_+)$, $U(W)$, $U(\mathbf{W}_1)$, and $U(\V)/(c -\kappa)$ are Bassian and Hopfian, and $U(\V)$ is Hopfian.
\end{proposition}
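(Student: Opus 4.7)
The plan is to deduce both properties from the just infinite GK-dimension results of Theorem~\ref{ithm:U} and Remark~\ref{rem:Wminus}, together with the basic fact that the GK-dimension of a subalgebra is bounded above by that of the ambient algebra. First I note that the Bassian property implies the Hopfian property, since an isomorphism $R \cong R/J$ is \emph{a fortiori} an injection $R \hookrightarrow R/J$; so it suffices to establish Bassian for the first four algebras, and to handle Hopfian for $U(\V)$ separately.

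For $R \in \{U(W_+), U(W), U(\mathbf{W}_1), U(\V)/(c-\kappa)\}$, the Bassian argument is uniform and immediate: each such $R$ has infinite GK-dimension (as noted throughout the paper, via the partition-function growth of its PBW basis), while by Theorem~\ref{ithm:U} and Remark~\ref{rem:Wminus} every proper quotient $R/J$ has $\GK(R/J) < \infty$. An injection $R \hookrightarrow R/J$ with $J \neq 0$ would realise $R$ as a subalgebra of $R/J$ and hence force $\infty = \GK(R) \leq \GK(R/J) < \infty$, a contradiction.

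For $U(\V)$ Hopfian, the same argument fails because $U(\V)$ is not itself just infinite in GK-dimension: the proper quotient $U(\V)/(c-\kappa)$ still has infinite GK-dimension. Suppose $\phi \colon U(\V) \to U(\V)$ is surjective with kernel $J \neq 0$. Since $\phi$ is surjective it carries the centre into the centre, and using the standard fact that $Z(U(\V)) = \K[c]$, I would write $\phi(c) = f(c)$ for some $f \in \K[x]$ and split on $\deg f$. If $f$ is a constant $\alpha$, then $c - \alpha \in J$; Theorem~\ref{ithm:U}, combined with $U(\V)/J \cong U(\V)$ having infinite GK-dimension, forces $J = (c-\alpha)$. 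But then $U(\V) \cong U(\V)/(c-\alpha)$ is impossible because the property of having just infinite GK-dimension is an isomorphism invariant, and $U(\V)/(c-\alpha)$ has this property while $U(\V)$ does not (witnessed by the quotient $U(\V)/(c-\alpha)$ itself).

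The main obstacle is the case $\deg f \geq 1$, where $\phi|_{\K[c]}$ is injective (as $\K[c]$ is a domain and $f(c)$ is not nilpotent) and hence $J \cap \K[c] = 0$. Here I would invoke Remark~\ref{rem:locVir}: since $\phi$ sends each nonzero $p(c) \in \K[c]$ to $p(f(c))$, still a nonzero element of $\K[c]$ and hence a unit in $U(\V) \otimes_{\K[c]} \K(c)$, the map $\phi$ extends uniquely to a $\K$-algebra endomorphism $\tilde\phi$ of $U(\V) \otimes_{\K[c]} \K(c)$. Using flatness of $\K(c)$ over $\K[c]$ and the $\K[c]$-torsion-freeness of $U(\V)$ (from PBW), one checks that $\ker \tilde\phi = J \otimes_{\K[c]} \K(c) \neq 0$. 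By Remark~\ref{rem:locVir}, $U(\V) \otimes_{\K[c]} \K(c)$ has just infinite GK-dimension as a $\K$-algebra, so the image of $\tilde\phi$, being a proper quotient, has finite GK-dimension; but this image contains $\tilde\phi(U(\V)) = \phi(U(\V)) = U(\V)$, which has infinite GK-dimension, the desired contradiction. Hence $J = 0$ and $\phi$ is an isomorphism.
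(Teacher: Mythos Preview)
Your proof is correct and follows the same strategy as the paper. For the first four algebras the arguments coincide verbatim: just infinite GK-dimension immediately rules out any injection into a proper quotient. For $U(\V)$, both you and the paper rely on Remark~\ref{rem:locVir}; the paper tensors the short exact sequence $0 \to J \to U(\V) \to U(\V)/J \to 0$ with $\K(c)$ and uses that $U(\V)/J$ is $\K[c]$-torsionfree to force $J \otimes_{\K[c]} \K(c) = 0$, while you extend $\phi$ to the localisation and compare GK-dimensions of the image --- the same idea viewed from a slightly different angle.

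Your treatment is in fact more careful on one point. The paper asserts that $U(\V)/J$ is $\K[c]$-torsionfree simply because $U(\V)/J \cong \operatorname{Im}(f) = U(\V)$, but this ring isomorphism is not $\K[c]$-linear, and torsion-freeness fails precisely when $\phi(c)$ is a scalar $\alpha$ (then $c-\alpha$ annihilates $U(\V)/J$). Your explicit case split on $\deg f$ disposes of this possibility directly, using that $U(\V)/(c-\alpha)$ has just infinite GK-dimension while $U(\V)$ does not, so they cannot be isomorphic. This supplies a justification the paper leaves implicit. The only extra input you invoke, $Z(U(\V)) = \K[c]$, is standard.
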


That $U(W_+)$ is Hopfian is proved in \cite[Remarks~2.2]{RS}, and \cite[Section 4]{RS} asks whether $U(W)$ is Bassian or Hopfian.

\begin{proof}
If $R$ has just infinite GK-dimension, then $\GK R/J < \GK R$ for any proper ideal $J$ of $R$, so $R$ cannot inject in $R/J$.  Thus the Bassian (and thus Hopfian) property for $U(W_+)$, $U(W)$, and $U(\V)/(c -\kappa)$ follows from Theorem~\ref{ithm:U}.
For $U(\mathbf{W}_1)$ it follows from Remark~\ref{rem:Wminus}.

To show that $U(\V)$ is Hopfian, let   $R = U(\V)$ and let $f$ be a surjective endomorphism of $R$, with kernel $J$.   As $R/J \cong \operatorname{Im}(f)$ is torsionfree as a module over $\KK[c]$,
the complex
\[0 \to J \otimes_{\KK[c]} \KK(c)  \to R \otimes_{\KK[c]} \KK(c) \to ( R/J) \otimes_{\KK[c]} \KK(c) \to 0\]
is exact.  Now by Remark~\ref{rem:locVir}, we must have $J \otimes_{\KK[c]} \KK(c) = 0$.
As $R$ is $\KK[c]$-torsionfree, $J = 0$.
\end{proof}

That $U(\V)$ is Hopfian also follows from \cite[Corollary~2.6]{RS} and Proposition~\ref{prop:ACC}.
We do not know whether $U(\V)$ is Bassian.

\medskip

{\bf Acknowledgements:} \
This work is funded by the EPSRC grant EP/M008460/1/.
The first named author is grateful to IHES and MPIM,  where   part of this work was done,
for hospitality and support.

We thank Jos\'e Figueroa-O'Farrill, Tom Lenagan,  and Lance Small for useful comments and discussions.

 \bibliographystyle{amsalpha}
 \bibliography{Wittbiblio}

\end{document}